\newtheorem{thm}{Theorem}[section]
\newtheorem{lem}[thm]{Lemma}
\newtheorem{cor}[thm]{Corollary}
\newtheorem{prop}[thm]{Proposition}
\newtheorem{defn}[thm]{Definition}
\newtheorem{rmk}{Remark}
\numberwithin{equation}{section}
\newcommand{\bel}{\begin{equation} \label}
\newcommand{\ee}{\end{equation}}
\def\beq{\begin{equation}}
\def\eeq{\end{equation}}
\newcommand{\bea}{\begin{eqnarray}}
\newcommand{\eea}{\end{eqnarray}}
\newcommand{\beas}{\begin{eqnarray*}}
\newcommand{\eeas}{\end{eqnarray*}}
\newcommand{\R}{\mathbb{R}}
\newcommand{\N}{\mathbb{N}}
\newcommand{\cO}{\mathcal{O}}
\newcommand{\M}{\mathcal{M}}
\def\epsilon{\varepsilon}
\def\phi {\varphi}
\def\I{\,|\,}
\DeclareMathOperator{\dis}{dist}
\def\p{\partial}
\renewcommand{\leq}{\leqslant}
\renewcommand{\geq}{\geqslant}
\title{A Finite Element Data Assimilation Method For The Wave Equation}
\author[Erik Burman]{Erik Burman}
\address{Department of Mathematics, University College London, London, UK-WC1E  6BT, United Kingdom}
\email{e.burman@ucl.ac.uk}
\thanks{EB acknowledges funding by EPSRC grants EP/P01576X/1 and
  EP/P012434/1}
\author[Ali Feizmohammadi]{Ali Feizmohammadi}
\address{Department of Mathematics, University College London, London, UK-WC1E  6BT, United Kingdom}
\email{a.feizmohammadi@ucl.ac.uk}
\author[Lauri Oksanen]{Lauri Oksanen}
\address{Department of Mathematics, University College London, London, UK-WC1E  6BT, United Kingdom}
\email{l.oksanen@ucl.ac.uk}
\thanks{LO acknowledges funding by EPSRC grants EP/P01593X/1 and EP/R002207/1}
\date{}
\begin{document}
\maketitle
\begin{abstract}
We design a primal-dual stabilized finite element method for the
numerical approximation of a data assimilation problem subject to the
acoustic wave equation. For the forward problem, piecewise affine, continuous, finite element
functions are used for the approximation in space and backward
differentiation is used in time. Stabilizing terms are added on the
discrete level. The design of these terms is driven by numerical
stability and the stability of the continuous problem, with the
objective of minimizing the computational error. Error estimates are then
derived that are optimal with respect to the approximation properties
of the numerical scheme and the stability properties of the continuous
problem.
The effects of discretizing the (smooth) domain boundary and other perturbations
in data are included in the analysis.
\end{abstract}

\section{Introduction}
We consider a data assimilation problem for the acoustic wave equation, formulated as follows. 
Let $n \in \{2,3\} $ and let $\Omega \subset \R^n$ be an open, connected, bounded set with smooth boundary $\p \Omega$, let $T > 0$, and let $u$ be the solution of 
\bel{pf}
\begin{aligned}
\begin{cases}
\Box u:=\partial_t^2 u - \triangle u = f, 
&\text{on $(0,T) \times \Omega$},
\\
u =0,
&\text{on $(0,T) \times \p \Omega$},
\\
u|_{t=0} = u_0,\ \p_t u|_{t=0} = u_1
&\text{on $\Omega$}.
\end{cases}
    \end{aligned}
\ee
The initial data $u_0, u_1$ are assumed to be a priori unknown functions, but the source $f$ is assumed to be known, together with the additional piece of information 
\bel{data_omega}
  \begin{aligned}
q=u|_{(0,T)\times \omega},
    \end{aligned}
\ee
where $\omega \subset \overline{\Omega}$ is open.
The data assimilation problem then reads:\\

\noindent (DA) Find $u_0$ and $u_1$ given $f$ and $q$.\\

\noindent In typical applications $f = 0$. Due to the finite speed of propagation, $T$ needs to be large enough in order for (DA) to have unique solution. 
Assuming that 
\bel{uniqueness_T}
    \begin{aligned}
T > 2 \max \{\dis(x,\omega) \I x \in \overline \Omega \},
    \end{aligned}
\ee
it follows from Holmgren's unique continuation theorem that (DA) is uniquely solvable. Here $\dis(x,\omega) = \min\{\dis(x,y) \I y \in \overline \omega\}$ 
and $\dis(x,y)$ is the distance function in $\Omega$, defined as the infimum over the lengths of continuous paths in $\Omega$, joining $x$ and $y$.

The problem (DA) can be exponentially ill-posed under the assumption (\ref{uniqueness_T}). In order to avoid such severely ill-posed cases, we will suppose that the geometric control condition holds in the sense of \cite{LLTT}. This means roughly speaking that any billiard trajectory intersects $\omega$ before time $T$. A billiard trajectory leaving from a point in $\Omega$ consists of line segments that are joined together at points on $\p \Omega$, with directions satisfying Snell's law of reflection.
However, the exact formulation of the geometric control condition requires also a consideration of trajectories gliding along $\p \Omega$.
It is well-known that the geometric control condition characterizes the cases where the problem (DA) is stable, and in a slightly different context, the characterization originates from \cite{BLR}.

We will analyse the convergence of a finite element method that gives an approximate solution to (DA). Our method is based on piecewise affine elements in space and the use of backward finite differences in time. The main contribution of the paper is to show that, when
complemented with a suitable stabilization, even this standard, low
order discretization, yields a convergence that is optimal with
respect to approximation and the stability of the continuous
problem. The stabilization terms are carefully designed balancing the numerical
stability, the approximation properties of the scheme and the stability of the continuous
problem. This allows us to prove linear convergence with respect to the mesh size in
the global space time $L^2$-norm, reflecting the Lipschitz stability
of the continuous problem. This stability holds under the assumption
that the continuum problem satisfies the geometric control condition.
The analysis also considers the effect of discretizing the smooth domain, as
well as other perturbations of the data. The resulting scheme is on
the form of a time-space primal-dual system. The forward
equation is independent of the dual. Therefore the gradient can be
computed by a forward solve, followed by a dual backward solve,
for steepest descent type iterative solving.
 
We hope that the present paper can act as a starting point for
exploration of more applied, but also more advanced, stabilized finite
element methods. Indeed although stabilization terms herein are
taylored for the low order method, the approach is general and can be
extended to other finite element methods. For instance, it might be desirable to use high order elements
in space and a more sophisticated discretization in time in order to reduce the numerical dissipation. 

\subsection{Previous literature}

There are two extensive traditions of research that are closely related to the problem (DA). As already mentioned above, a variation of (DA) arises as a mathematical model for the medical imaging technique called photoacoustic tomography (PAT), and works related to PAT form one of the two traditions. We refer to \cite{PAT_review, Xu2006, Wang2009} for physical aspects of PAT, and to \cite{Kuchment2008,scherzer2010handbook} for mathematical reviews.

The problem (DA) models wave propagation in a cavity $\Omega$, whereas the classical PAT problem is formulated in $\R^3$. However, the papers \cite{Acosta2015, lCO, Nguyen2015, Stefanov2015b} study the PAT problem in a cavity. All these papers consider methods based on using iterative time reversal for the continuum wave equation, an approach that originates from \cite{Stefanov2009a}, and none of them consider the issues arising from discretization. 

The second tradition draws from control theory, and it uses so-called Luenberger observers. The data assimilation problem (DA) arises as the dual problem of a control problem, and analysis of the latter is typically reduced to the analysis of (DA) by using the Hilbert uniqueness method originating from \cite{Lions1988}.

A Luenberger observers based algorithm was first analysed in a finite dimensional ODE context in \cite{Auroux2005}. An abstract version of the method, applicable to the problem (DA), was introduced in \cite{Ramdani2010}.
The two traditions have a significant overlap. For instance, as pointed out in \cite{lCO}, 
the result \cite{Nguyen2015} on the PAT problem fits in the abstract setting of \cite{Ramdani2010}.
In particular, the methods in both the traditions can be formulated as Neumann series in infinite dimensional spaces. 

The paper \cite{lHR} studies a discretization of a Luenberger observers based algorithm. The error estimate in \cite{lHR} depends linearly on the point of truncation of the Neumann series (see Theorem 1 there), and this ultimately leads to logarithmic convergence with respect to the mesh size. 
The issue with the truncation can be avoided if a stability estimate is available on a scale of discrete spaces. Such estimates were first derived in \cite{IZ} and we refer the reader to
the survey articles \cite{Z,EZ}, as well as the recent paper \cite{EMZ} for more
details. However, quoting \cite{Cindea2015}, such estimates are proven only in ``specific and somehow academic situations''.
We refer to the monograph \cite{Ervedoza2013}, see in particular Chapter 5 on open problems, for a detailed discussion of the truncation issue in the context of the control problem, dual to (DA).

The closest work to the present paper is \cite{Cindea2015}. There two finite element methods for (DA) are considered: one of them is stabilized while the other is not. The method without stabilization is shown to converge only under the further assumption that certain discrete inf-sup condition holds, see (42) there. 
On the other hand, the stabilized method is shown to converge to the exact solution only under a further regularity assumption on an auxiliary Lagrangian multiplier, see $\lambda$ in Proposition 2 there. 
Under this assumption, it is then shown in the $1+1$-dimensional case, that the stabilized method converges with quadratic rate when the Bogner-Fox-Schmit $C^1$-elements, with third order polynomials, are used in spacetime rectangles. 

The data assimilation problem (DA) can also be solved using the
quasi-reversibility method. This method originates from \cite{lLL}, and it has been applied to data assimilation problems subject to the wave equation in \cite{KM, KR}, and more recently to the PAT problem in \cite{lCK}. Another interesting application is given in the recent preprint \cite{lBDP}. There the authors solve an obstacle detection problem by using a level set method together with the quasi-reversibility method applied to a variant of (DA).

The quasi-reversibility method introduces an auxiliary Tikhonov type regularization parameter. When deriving a rate of convergence for the method, this parameter needs to be chosen as a function of the mesh size $h$. In  \cite{lCK} the regularization parameter is called $\epsilon$, and by balancing the estimates in Theorems 3.3, 4.6 and 5.3 there, we are lead to the choice $\epsilon(h) = h^{2/3}$. This gives the convergence rate $h^{2/3}$ for the quasi-reversibility method \cite{lCK}.

To summarize, the linear convergence rate of our method is superior to that of the Neumann series based methods and the quasi-reversibility method. Contrary to \cite{Cindea2015} it is also optimal with respect to the order of the finite elements used. The convergence proof is based on using the continuum estimates, and the only geometric assumption needed is the sharp geometric control condition. Finally, the method uses a very simple discretization of the spacetime, and it is likely that the ideas presented here can be adapted to various other discretizations. 

Let us also mention that the method in the present paper draws from our experience on stabilized finite element methods for the elliptic Cauchy problem \cite{Bu13,Bu14}, and other types of data assimilation problems, see \cite{BON} for elliptic and \cite{BO,BIO} for parabolic cases. In \cite{BON} we considered the Helmholtz equation. The convergence estimate there is explicit in the wave number, and exhibits a hyperbolic character in the sense that it relies on a convexity assumption that can viewed as a particular local version of the geometric control condition.





\section{Continuum Estimates}

The main aim of this section is to recall a continuum observability estimate for the wave operator under some geometric assumptions on the observable domain $\cO=(0,T) \times \omega$. In order to state these geometric conditions we will need the following definition. We refer the reader to \cite{LLTT} for the definition of compressed generalized bicharacteristics.

\begin{defn}[See \cite{BLR},\cite{LLTT}]
We say that $\cO\subset\M$ satisfies the geometric control condition in $\M$, if every compressed generalized bicharacteristic $^b\gamma(s)=(t(s),x(s),\tau(s),\xi(s))$ intersects the set $\cO$ for some $s \in \R$.
\end{defn}

\noindent With this definition in mind, we can state the continuum estimate that is used to derive a convergence rate for our finite element method:

\begin{thm} 
\label{continuum}
Suppose $\mathcal{M}= (0,T) \times \Omega$ where $\Omega$ is a domain with smooth boundary. Let $\omega \subset \overline{\Omega}$ and assume that $\cO=(0,T)\times \omega$ satisfies the geometric control condition. If $u \in L^2(\mathcal{M})$ with $u(0,\cdot) \in L^2(\Omega)$, $\partial_t u (0,\cdot) \in H^{-1}(\Omega)$, $u|_{(0,T) \times \partial \Omega}=h \in L^2((0,T)\times \partial \Omega)$ and $\Box u = f \in H^{-1}(\mathcal{M})$, then $u \in C^1([0,T];H^{-1}(\Omega))\cap C([0,T];L^2(\Omega))$ and
$$\sup_{t \in [0,T]}(\|u(t,\cdot)\|_{L^2(\Omega)}+\|\p_t u(t,\cdot)\|_{H^{-1}(\Omega)})  \lesssim \|u\|_{L^2(\cO)} + \|f\|_{H^{-1}(\mathcal{M})}+\|h\|_{L^2((0,T) \times \partial \Omega)}.$$
\end{thm}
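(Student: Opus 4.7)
\medskip
\noindent\textbf{Proof plan.} The idea is to decouple the inhomogeneities from the unknown initial data by a lifting, apply the classical BLR observability to the homogeneous part, and use well-posedness by transposition to control the lifting. The argument has three steps.

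\medskip
\noindent\emph{Step 1 (Well-posedness by transposition).} For the adjoint problem $\Box\phi=g$ in $\mathcal{M}$ with $\phi|_{(0,T)\times\partial\Omega}=0$ and terminal data $\phi(T,\cdot)=0$, $\partial_t\phi(T,\cdot)=0$, the standard energy identity gives $\phi\in C([0,T];H^1_0(\Omega))\cap C^1([0,T];L^2(\Omega))$ whenever $g\in L^1(0,T;L^2(\Omega))$, and the Lasiecka--Lions--Triggiani hidden regularity result yields $\partial_\nu\phi\in L^2((0,T)\times\partial\Omega)$. Pairing $u$ against $\phi$ and integrating by parts formally defines $u$ as the unique solution by transposition and gives the estimate
\[
\sup_{t\in[0,T]}\bigl(\|u(t,\cdot)\|_{L^2(\Omega)}+\|\partial_t u(t,\cdot)\|_{H^{-1}(\Omega)}\bigr) \lesssim \|u_0\|_{L^2}+\|u_1\|_{H^{-1}}+\|f\|_{H^{-1}(\mathcal{M})}+\|h\|_{L^2((0,T)\times\partial\Omega)},
\]
which in particular yields the asserted time regularity $u\in C([0,T];L^2(\Omega))\cap C^1([0,T];H^{-1}(\Omega))$.

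\medskip
\noindent\emph{Step 2 (Dual observability under GCC).} For the purely homogeneous problem ($f=0$, $h=0$) with initial data in $H^1_0(\Omega)\times L^2(\Omega)$, the BLR theorem of \cite{BLR,LLTT} gives, under the geometric control condition on $\cO$, the energy-level observability $\|u_0\|_{H^1_0}+\|u_1\|_{L^2}\lesssim\|\partial_t u\|_{L^2(\cO)}$. A standard HUM-type duality argument (trading one derivative between data and observation) then produces the dual observability estimate one order lower:
\[
\|u_0\|_{L^2(\Omega)}+\|u_1\|_{H^{-1}(\Omega)} \lesssim \|u\|_{L^2(\cO)}.
\]

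\medskip
\noindent\emph{Step 3 (Lifting and combination).} Let $v$ be the weak-by-transposition solution of $\Box v=f$ in $\mathcal{M}$ with $v|_{(0,T)\times\partial\Omega}=h$ and zero initial data, and set $w=u-v$. Then $w$ solves the homogeneous problem with initial data $(u_0,u_1)$, so by Step~2,
\[
\|u_0\|_{L^2(\Omega)}+\|u_1\|_{H^{-1}(\Omega)} \lesssim \|w\|_{L^2(\cO)} \leq \|u\|_{L^2(\cO)}+\|v\|_{L^2(\cO)},
\]
and Step~1 applied to $v$ gives $\|v\|_{L^2(\cO)}\lesssim \|f\|_{H^{-1}(\mathcal{M})}+\|h\|_{L^2((0,T)\times\partial\Omega)}$. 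Substituting this bound on $(u_0,u_1)$ into the estimate of Step~1 for the full solution $u$ yields the desired inequality.

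\medskip
\noindent\emph{Main obstacle.} The substantive content is the dual observability in Step~2, which rests on the deep BLR/LLTT result and whose derivation at the $L^2\times H^{-1}$ scale from the energy-level estimate requires a careful duality argument. Steps~1 and~3 are organizational once the transposition framework and the hidden regularity of $\partial_\nu\phi$ are in hand, but care is needed to ensure that the boundary trace $h\in L^2$ and source $f\in H^{-1}$ are handled simultaneously without loss in the constants.
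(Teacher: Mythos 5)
Your proposal is correct and follows essentially the same route as the paper: split $u$ into the zero-initial-data solution carrying $(f,h)$ and the homogeneous solution carrying $(u_0,u_1)$ (your $v$ and $w$ are the paper's $v_1$ and $v_2$), apply the $L^2\times H^{-1}$ observability under GCC to the homogeneous part, use the Lasiecka--Lions--Triggiani transposition well-posedness (Lemma~\ref{solutions low regularity}) for the forward bounds and for identifying $u$ with the transposition solution, and combine via the triangle inequality on $\cO$. The only cosmetic difference is that the paper quotes the low-regularity observability directly as Theorem~\ref{internal obs} (citing BLR/LLTT), whereas you sketch its derivation from the energy-level estimate by a shift/duality argument.
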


\noindent Theorem ~\ref{continuum} is a consequence of the following homogeneous version:

\begin{thm}[Observability estimate]
\label{internal obs}
Let $\cO$ satisfy the geometric control condition. There exists a constant $C>0$ such that for any initial data $w|_{t=0}=g_1\in L^2(\Omega)$ and $\partial_t w|_{t=0}=g_2\in H^{-1}(\Omega)$, the corresponding unique weak solution $w$ to $\Box w=0$, $w|_{(0,T) \times \partial \Omega}=0$ with
$$ w \in C((0,T);L^2(\Omega)) \cap C^1((0,T);H^{-1}(\Omega))$$
satisfies:
$$ \|g_1\|_{L^2(\Omega)} + \|g_2\|_{H^{-1}(\Omega)} \leq C \|w\|_{L^2(\cO)}.$$
\end{thm}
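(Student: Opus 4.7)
The plan is to reduce Theorem~\ref{internal obs} to the classical Bardos--Lebeau--Rauch observability inequality for finite-energy solutions via a shift-of-regularity (anti-derivative in time) argument. Recall that under the geometric control condition, BLR (as sharpened by \cite{LLTT}) asserts that every solution $v\in C([0,T];H^1_0(\Omega))\cap C^1([0,T];L^2(\Omega))$ of $\Box v=0$ with zero Dirichlet data satisfies
\[
\|\nabla v(0)\|_{L^2(\Omega)}^{2} + \|\partial_t v(0)\|_{L^2(\Omega)}^{2} \leq C \int_0^T\!\!\int_\omega |\partial_t v|^{2}\, dx\, dt.
\]
I would like to manufacture such a finite-energy $v$ out of the merely $L^2\times H^{-1}$ solution $w$, so that the observation $\partial_t v$ becomes $w$ itself on $\cO$.

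To do this I would first choose $v_0\in H^1_0(\Omega)$ by Lax--Milgram as the unique solution of $\Delta v_0 = g_2$ in $\Omega$; this satisfies $\|v_0\|_{H^1_0(\Omega)} \asymp \|g_2\|_{H^{-1}(\Omega)}$. Then set
\[
v(t,x) := v_0(x) + \int_0^t w(s,x)\,ds,
\]
so that $\partial_t v = w$, $v(0) = v_0 \in H^1_0(\Omega)$, and $\partial_t v(0) = g_1 \in L^2(\Omega)$. A formal calculation using $\Box w = 0$ and $\partial_t w|_{t=0}=g_2$ gives
\[
\Delta v(t) = \Delta v_0 + \int_0^t \partial_t^{2}w(s)\,ds = g_2 + \partial_t w - g_2 = \partial_t w = \partial_t^{2} v,
\]
so $\Box v = 0$. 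Once $v$ is known to be a bona fide finite-energy solution, BLR applied to $v$ together with $\partial_t v = w$ on $\cO$ gives
\[
\|\nabla v_0\|_{L^2(\Omega)}^{2} + \|g_1\|_{L^2(\Omega)}^{2} \leq C \|w\|_{L^2(\cO)}^{2},
\]
which, combined with $\|v_0\|_{H^1_0} \asymp \|g_2\|_{H^{-1}}$, is precisely the claimed estimate.

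The hard part will be making the manipulations in the previous paragraph rigorous at the stated low regularity, where $\Delta w$ and $\partial_t^{2}w$ are merely distributions in spacetime and a priori $v$ lies only in $C^1([0,T];L^2(\Omega))$. I would handle this by a density argument: first prove the estimate for smooth compactly supported data $(g_1,g_2)\in C_c^\infty(\Omega)\times C_c^\infty(\Omega)$, where the computation is classical and one sees directly that $v\in C([0,T];H^1_0(\Omega))$ with finite energy, and then approximate general data and pass to the limit on both sides. Continuous dependence of $w$ on the initial data in the topology of $L^2(\cO)$, a consequence of Theorem~\ref{continuum}, handles the right-hand side, while the isomorphism $(g_1,g_2)\mapsto(g_1,v_0)$ from $L^2(\Omega)\times H^{-1}(\Omega)$ onto $L^2(\Omega)\times H^1_0(\Omega)$ handles the left.
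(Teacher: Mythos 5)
The paper does not actually prove Theorem~\ref{internal obs}: it treats it as a classical result and cites the literature directly for the exact statement at the $L^2(\Omega)\times H^{-1}(\Omega)$ level (Proposition~1.2 of \cite{LLTT}, Theorem~3.3 of \cite{BLR}, and, under the $\Gamma$-condition, \cite{DZZ}). Your route is different in kind: you take as black box only the classical finite-energy observability (data in $H^1_0\times L^2$, observation $\|\partial_t v\|_{L^2(\cO)}$) and deduce the low-regularity version by the standard shift-of-regularity trick, i.e.\ $v(t)=v_0+\int_0^t w\,ds$ with $\Delta v_0=g_2$, so that $\partial_t v=w$, $\Box v=0$, and $\|\nabla v_0\|_{L^2}\asymp\|g_2\|_{H^{-1}}$, followed by a density argument. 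This is correct in outline and is in fact how the two energy levels are usually related in the literature; what it buys is an explicit derivation rather than a citation, at the cost of still invoking an essentially equivalent classical theorem one rung up the regularity ladder. The formal computation $\Delta v=\partial_t w=\partial_t^2 v$ and the identification of the observation term are fine, and smooth (or merely $H^1_0\times L^2$) data suffice to make $v$ a bona fide finite-energy solution before passing to the limit.

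One point needs repair: in the limiting step you invoke ``continuous dependence of $w$ on the initial data in the topology of $L^2(\cO)$, a consequence of Theorem~\ref{continuum}.'' In the paper's logical order Theorem~\ref{continuum} is \emph{deduced from} Theorem~\ref{internal obs}, so using it here would be circular; moreover Theorem~\ref{continuum} is an observability-type bound (data controlled by the observation), which is the wrong direction for what you need. The correct tool is Lemma~\ref{solutions low regularity} (the Lasiecka--Lions--Triggiani well-posedness result): it gives $\|w\|_{C([0,T];L^2(\Omega))}\lesssim\|g_1\|_{L^2(\Omega)}+\|g_2\|_{H^{-1}(\Omega)}$, hence convergence of $\|w^k\|_{L^2(\cO)}$ along an approximating sequence of smooth data, which is exactly what your density argument requires. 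With that substitution the proof closes.
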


Theorem ~\ref{internal obs} is a classical result that yields an interior observability estimate under the geometric control condition. The proof of the theorem uses propagation of singularities for the wave equation and only works for smooth geometries. The geometric control condition is essentially a necessary and sufficient condition for obtaining the observability estimate and roughly states that all light rays in $\mathcal{M}$ must intersect $\cO$ taking into account reflections at the boundary \cite{BLR}. We refer the reader to \cite[Proposition 1.2]{LLTT} for a proof of this theorem using a combination of the study of semiclassical defect measures and propagation of singularities. One can also look at \cite[Theorem 3.3]{BLR} for an alternative proof using propagation of singularites. The paper \cite{BLR} deals with boundary observability but the proof can be applied to obtain interior observability as well. We omit rewriting these proofs here as they are well known in the literature. Let us remark at this point that there is a stronger geometric condition on the observable domain $\cO$ known as the $\Gamma-$ condition which is much simpler to verify in general. We recall the $\Gamma-$condition defined as follows
\begin{defn}
For each $ x_0 \notin \Omega$, Let $\Gamma_{x_0}:= \{ x \in \partial \Omega \, | \,(x-x_0)\cdot \nu(x) \geq 0 \}$. We say that $\cO=(0,T)\times \omega$ satisfies the $\Gamma-$condition if 
$$ \exists x_0 \notin \Omega, \quad \exists \delta>0 \quad \text{such that} \quad \mathcal{N}_\delta(\Gamma_{x_0}) \cap \Omega \subset \omega,$$
$$ T > 2 \sup_{x \in \Omega} |x-x_0|,$$
where $\mathcal{N}_\delta(\Gamma_{x_0}):= \{ y \in \R^n \, | \, |y-x| < \delta \quad \text{for some} \quad x \in \Gamma_{x_0} \}.$
\end{defn}

\noindent It is known that the $\Gamma-$ condition implies the geometric control condition (see for example \cite{M}). In essence, the $\Gamma-$condition roughly requires $T$ and $\bar{\omega} \cap \overline{\partial\Omega}$ to be relatively large. Although not as sharp as the geometric control condition, the advantage of the $\Gamma-$ condition lies in its applicability in the presence of non-smooth geometries and the explicit derivation of the constant $C$ in Theorem ~\ref{internal obs}. For an alternative proof of Theorem ~\ref{internal obs} in the case that $\cO$ satisfies the $\Gamma-$condition, we refer the reader to \cite[Theorem 2.2]{DZZ}. One can also use the Carleman estimate \cite[Theorem 1.1]{BBE} to derive this estimate although in this case one has to shift the Sobolev estimates.

A key ingredient in deriving the Lipschitz stability result in this paper is a corollary of the observability estimate for the wave equation as stated in Theorem ~\ref{continuum}. In the remainder of this section, we will show that Theorem ~\ref{continuum} indeed follows from the observability estimate. To this end, we will need the following lemma concerning solutions to the mixed Dirichlet-Cauchy problem for the wave equation with weak Sobolev norms. We refer the reader to \cite[Theorem 2.3]{LLT} together with Remark 2.8 in that paper for the proof.
\begin{lem}
\label{solutions low regularity}
Let $\Omega$ be a bounded domain with smooth boundary. Suppose $(u_0,u_1,f,h) \in X$ where $X= L^2(\Omega) \times H^{-1}(\Omega)\times H^{-1}(\M)\times L^2((0,T)\times\partial\Omega)$ with the usual product topology. Then the equation (\ref{pf}) has a unique solution $u \in Y:=C^1([0,T];H^{-1}(\Omega)) \cap C([0,T];L^2(\Omega))$. Furthermore, the linear mapping that maps $(u_0,u_1,f,h)$ to $u$ is continuous:
\[ \|u\|_{Y} \lesssim \|(u_0,u_1,f,h)\|_X.\]
\end{lem}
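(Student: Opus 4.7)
\textbf{Proof proposal for Lemma \ref{solutions low regularity}.} The plan is to exploit linearity and split the data $(u_0, u_1, f, h)$ into three pieces, handling each by a separate argument: (i) the homogeneous initial value problem with $(u_0, u_1) \in L^2(\Omega) \times H^{-1}(\Omega)$ and $f = 0$, $h = 0$; (ii) the forced problem with zero initial and boundary data and $f \in H^{-1}(\M)$; and (iii) the boundary value problem with zero initial data, $f = 0$, and $h \in L^2((0,T) \times \partial\Omega)$. Membership of the sum in $Y$ and the associated estimate then follow from the individual estimates by the triangle inequality, while uniqueness for the full problem follows from uniqueness of each piece.

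For (i), the Dirichlet wave operator generates a strongly continuous unitary group on the finite energy space $H^1_0(\Omega) \times L^2(\Omega)$. Conjugating by the (unbounded) inverse Dirichlet Laplacian $(-\triangle)^{-1}$ shifts the scale down by one derivative and yields a unitary group on $L^2(\Omega) \times H^{-1}(\Omega)$; this gives continuity of $t \mapsto (u(t), \partial_t u(t))$ into $L^2(\Omega) \times H^{-1}(\Omega)$ together with the desired bound. For (ii), I would define the solution by transposition: for any $g \in C^\infty_c(\M)$, set $\pair{u,g}_\M = \pair{f,\phi}_{H^{-1}(\M), H^1_0(\M)}$, where $\phi$ solves the backward homogeneous problem $\Box \phi = g$, $\phi|_{(0,T) \times \partial\Omega} = 0$, $\phi|_{t=T} = \partial_t \phi|_{t=T} = 0$. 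Part (i), applied at the energy level after time reversal, guarantees $\phi \in C([0,T]; H^1_0(\Omega)) \cap C^1([0,T]; L^2(\Omega))$ with continuous dependence on $g$, hence the functional is well defined. A Duhamel-type approximation with smooth $f_k \to f$ in $H^{-1}(\M)$ identifies this transposition solution with the classical one and delivers $u \in Y$.

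The crux is (iii), and it rests on the so-called \emph{hidden regularity} of the Neumann trace. Precisely, one needs: if $(\psi_0, \psi_1) \in H^1_0(\Omega) \times L^2(\Omega)$ and $\phi$ solves the backward homogeneous Dirichlet problem with terminal data $(\psi_0, \psi_1)$, then $\partial_\nu \phi \in L^2((0,T) \times \partial\Omega)$ with $\|\partial_\nu \phi\|_{L^2((0,T) \times \partial\Omega)} \lesssim \|\psi_0\|_{H^1_0(\Omega)} + \|\psi_1\|_{L^2(\Omega)}$. This estimate is not a consequence of the standard trace theorem, since finite energy solutions are only $H^1$ in spacetime, and the normal derivative would a priori live in $H^{-1/2}$ of the lateral boundary. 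It is proved by a multiplier identity: pick a vector field $m \in C^\infty(\overline{\Omega}; \R^n)$ with $m = \nu$ on $\partial\Omega$, multiply $\Box \phi = 0$ by $m \cdot \nabla \phi$, integrate over $\M$, and use integration by parts; all interior terms are controlled by the energy, leaving a boundary term proportional to $|\partial_\nu \phi|^2$. Once this trace estimate is in hand, define $u$ by transposition through
\[
\pair{(u(t), -\partial_t u(t)), (\psi_1, \psi_0)} = -\int_0^t \int_{\partial\Omega} h \, \partial_\nu \phi \, dS \, ds,
\]
where $\phi$ is the backward homogeneous solution with terminal data $(\psi_0, \psi_1)$ posed on $(0,t) \times \Omega$. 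The hidden regularity estimate shows the right-hand side is a bounded linear functional on $H^1_0(\Omega) \times L^2(\Omega)$, so $(u(t), \partial_t u(t)) \in L^2(\Omega) \times H^{-1}(\Omega)$ with the stated norm control. Continuity in $t$ is obtained by dominated convergence in the integral, and the fact that $u$ truly solves the mixed problem follows from Green's identity applied to smooth approximations of $h$.

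The main obstacle is the hidden regularity estimate in step (iii); everything else is essentially semigroup theory or duality. The subtlety lies in setting up the multiplier identity with the correct regularity: one must first prove it for smooth data and then pass to limits, taking care that the identity remains meaningful at the $L^2 \times H^{-1}$ scale. This is exactly the content of the Lasiecka--Lions--Triggiani trace theorem cited in the paper, and it is unavoidable because it encodes the gain of one-half derivative on the boundary that is specific to second-order hyperbolic operators with non-characteristic lateral boundary.
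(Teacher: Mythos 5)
You should first note that the paper does not actually prove this lemma: it is quoted from the literature, with the proof deferred wholesale to \cite[Theorem 2.3 and Remark 2.8]{LLT}. So your proposal is not paralleling an argument in the paper but reconstructing the proof of the cited theorem, and in outline you reconstruct it correctly: linear superposition into the three sub-problems, extension of the Dirichlet wave group to $L^2(\Omega)\times H^{-1}(\Omega)$ for the initial data, transposition for the source and boundary terms, and the hidden-regularity estimate $\|\partial_\nu \phi\|_{L^2((0,T)\times\partial\Omega)} \lesssim \|\psi_0\|_{H^1_0(\Omega)} + \|\psi_1\|_{L^2(\Omega)}$ proved by the multiplier $m\cdot\nabla\phi$ --- which you rightly single out as the crux and which is indeed the Lasiecka--Lions--Triggiani trace theorem. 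One small slip in (i): conjugating by $(-\triangle)^{-1}$ shifts the scale by \emph{two} derivatives, not one; you want $(-\triangle)^{-1/2}$, or equivalently you can realize the $L^2\times H^{-1}$ group as the adjoint of the energy-space group under the duality of $L^2\times H^{-1}$ with $H^1_0\times L^2$.

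The one step that fails as written is the pairing in (ii). You define $\langle u,g\rangle_{\M} = \langle f,\phi\rangle_{H^{-1}(\M),H^1_0(\M)}$, but the backward solution $\phi$ with zero terminal data and source $g\in C^\infty_c(\M)$ vanishes near $t=T$ and on the lateral boundary, yet in general \emph{not} at $t=0$; hence $\phi\notin H^1_0(\M)$, and the bracket is undefined for a general $f\in H^{-1}(\M)=(H^1_0(\M))'$. Writing $f=f_0+\mathrm{div}_{t,x}F$ with $L^2$ components and integrating by parts does not repair this by itself, because the resulting value depends on the chosen representative through a boundary term at $t=0$. This is precisely the delicate point that the paper's citation of Remark 2.8 in \cite{LLT} is meant to cover; a self-contained fix requires, for instance, splitting off the $\partial_t$-part of $f$ and absorbing it into modified initial data, or testing against functions vanishing at both time endpoints and recovering the initial conditions separately. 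Relatedly, in (iii), dominated convergence for fixed $(\psi_0,\psi_1)$ only gives weak continuity of $t\mapsto (u(t),\partial_t u(t))$; the asserted strong continuity in $C([0,T];L^2(\Omega))\cap C^1([0,T];H^{-1}(\Omega))$ should be obtained from the uniform estimate plus density of smooth boundary data, an argument you gesture at with the ``smooth approximations of $h$'' but should state explicitly.
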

\noindent We are now ready to show the derivation of Theorem ~\ref{continuum} from Theorem ~\ref{internal obs}.
\begin{proof}[Proof of Theorem ~\ref{continuum}]
\noindent Let us consider the vector valued function $v:=[v_1\ v_2]^T$ with $v_i \in L^2(\mathcal{M})$ for $i \in \{1,2\}$ defined as the solution to the following separable system of PDEs:
\[
\left\{ \begin{array}{rcll} &\Box v = [f\ 0]^T\\
& v(t,x)=[h\ 0]^T \quad \forall x \in \partial \Omega, \forall t \in [0,T]\\
& v(0,x)=[0\ u_0]^T \quad \forall x \in \Omega\\
 &\partial_t v(0,x)=[0\ u_1]^T \quad \forall x \in \Omega. \end{array}\right.
\]
\noindent Note that if $w:=u-(v_1+v_2)$, then $w \in L^2(\mathcal{M})$ and $w$ satisfies the homogeneous wave equation
\[
\left\{ \begin{array}{rcll} &\Box w = 0\\
& w(t,x)=0 \quad \forall x \in \partial \Omega, \forall t \in [0,T]\\
& w(0,x)=0, \quad \forall x \in \Omega\\
 &\partial_t w(0,x)=0 \quad \forall x \in \Omega. \end{array}\right.
\]
\noindent \noindent By Lemma ~\ref{solutions low regularity}, we have $w=0$, which implies that $u=v_1+v_2$. Since $\cO$ satisfies the geometric control condition, the observability estimate in Theorem ~\ref{internal obs} holds for the function $v_2$ and together with Lemma ~\ref{solutions low regularity} we have that for all $t \in [0,T]$:
$$ \|v_2(t,\cdot)\|_{L^2(\Omega)}+\|\p_t v_2(t,\cdot)\|_{H^{-1}(\Omega)} \lesssim \|v_2\|_{L^2(\cO)}.$$
\noindent Similarly, applying Lemma ~\ref{solutions low regularity} to the function $v_1$ implies that:
$$ \|v_1(t,\cdot)\|_{L^2(\Omega)}+\|\p_t v_1(t,\cdot)\|_{H^{-1}(\Omega)}  \lesssim \|f\|_{H^{-1}(\mathcal{M})}+\|h\|_{L^2((0,T)\times\partial\Omega)}.$$
\noindent Finally, combining the above estimates, we deduce that:
\[\|u(t,\cdot)\|_{L^2(\Omega)}+\|\p_t u(t,\cdot)\|_{H^{-1}(\Omega)} \leq\|v_1(t,\cdot)\|_{L^2(\Omega)}+\|\p_t v_1(t,\cdot)\|_{H^{-1}(\Omega)}+\|v_2(t,\cdot)\|_{L^2(\Omega)}+\|\p_t v_2(t,\cdot)\|_{H^{-1}(\Omega)}\]
\[\lesssim \|v_1\|_{L^2(\cO)}+\|f\|_{H^{-1}(\mathcal{M})}+\|h\|_{L^2((0,T)\times \partial\Omega)}\lesssim\|f\|_{H^{-1}(\mathcal{M})}+\|h\|_{L^2((0,T)\times \partial\Omega)}+ \|u-v_1\|_{L^2(\cO)}\]
\[\lesssim \|f\|_{H^{-1}(\mathcal{M})}+\|h\|_{L^2((0,T)\times \partial\Omega)}+ \|u\|_{L^2(\cO)}.\]
\end{proof}

\section{Discretization}
Let us begin with a brief discussion of the overall discretization approach employed in this paper. We consider the wave equation \eqref{pf} and the preliminary Lagrangian functional
$$ \mathcal{L}_0(u,z) = \frac{1}{2}\|u-q\|_{L^2((0,T)\times \omega)}^2 + \int_{\mathcal{M}} (\partial^2_t u) \,  z + \nabla u \cdot \nabla z - fz \, dxdt.$$ 
The Euler-Lagrange equations for $\mathcal{L}_0$ can be written as follows
\begin{align*}
\langle \partial_u \mathcal{L}_0(u,z),v \rangle &= \int_{0}^T\int_{\omega} (u-q)v \,dtdx + \int_{\mathcal{M}} (\partial^2_t v) \,  z + \nabla v \cdot \nabla z  \, dtdx=0,\\
\langle \partial_z \mathcal{L}_0(u,z),w \rangle &= \int_{\mathcal{M}} (\partial^2_t u) \,  w + \nabla u \cdot \nabla w - fw \, dtdx=0
\end{align*}
for all $v,w$. It is clear that if $u$ is equal to the unique solution to \eqref{pf} and $z \equiv 0$, then these Euler-Lagrange equations are satisfied. This simple idea outlines the overall approach in this paper. We will employ a discrete Lagrangian functional whose critical points will converge to the unique solution to the continuum problem. However, as the term  $\int_{0}^T\int_{\omega} (u-q)v \,dtdx$ does not seem to give enough stability for the discrete problem to converge, we will add certain regularization terms in the discrete setting. The design of these terms is driven by numerical stability and the stability of the continuous problem, with the objective of minimizing the computational error. In the final section of the paper we will briefly discuss the possibility of removing some of these regularization terms.\\

Let us now present the discretization of \eqref{pf}. We will first consider
a family of polyhedral domains $\Omega_h$ approximating  $\Omega$ and
similarly let
$\omega_h$ denote a family of domains approximating $\omega$.
Let $\mathcal{T}_h$ be a conforming triangulation of the polyhedral domain $\Omega_h$. Let $h_K=diam(K)$ be the local mesh parameter and $h=max_{K \in \mathcal{T}_h} h_K$ the mesh size. We assume that the family of triangulations $\mathcal{T}_h$ is quasi uniform. Let $V_h$ be the standard space of piecewise affine continuous finite elements satisfying the zero boundary condition,
$$V_h = \{v \in H^1_0(\Omega_h); v|_{K}\in\mathbb{P}_1(K), \forall K
\in \mathcal{T}_h\}.$$
We assume that the approximate geometries $\Omega_h$ and $\omega_h$ are
sufficiently close to $\Omega$ and $\omega$ in the following sense,
\bel{ext}
\dis(x,\partial \Xi) \lesssim h^2 \quad \forall x \in  \partial \Xi_h,
\quad \Xi= \Omega \mbox{ or } \Xi = \omega.
\ee
This is possible for domains
$\Omega,\, \omega$ with smooth boundary (see for example \cite{BK94}). We have the following lemma:
\begin{lem}(See \cite[Lemma 2]{BK94})
\label{extension}
Let the condition \eqref{ext} be satisfied. Then for all $v \in
H^1(\Omega \cup \Omega_h)$ the following estimate holds:
$$\int_{(\Omega \setminus \Omega_h)\cup (\Omega_h \setminus \Omega)}
|v(x)|^2 \,dx \lesssim h^2
\left(\int_{\partial \Omega}|v(x)|^2 \,ds+ h^2 \int_{\Omega}|\nabla v(x)|^2 \,dx\right).$$
\end{lem}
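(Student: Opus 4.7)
The plan is to localize the symmetric difference to a tubular neighborhood of $\partial\Omega$ of thickness $O(h^2)$, then to reduce the $L^2$-integral there to a boundary-trace contribution and a gradient contribution obtained from the one-dimensional fundamental theorem of calculus along normal rays of $\partial\Omega$. First, I would observe, via \eqref{ext}, that every point of $\partial\Omega_h$ lies within $Ch^2$ of $\partial\Omega$, so
\[ (\Omega\setminus\Omega_h)\cup(\Omega_h\setminus\Omega) \subset N_\delta := \{x\in\R^n : \dis(x,\partial\Omega)<\delta\}, \qquad \delta := Ch^2. \]
Since $\partial\Omega$ is smooth, for $h$ small enough the map $\Phi(s,r):=s+r\nu(s)$ is a diffeomorphism of $\partial\Omega\times(-\delta,\delta)$ onto $N_\delta$ whose Jacobian is uniformly bounded above and below by positive constants independent of $h$.

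Next, for $x=\Phi(s,r)\in N_\delta$ I would apply the fundamental theorem of calculus in the normal direction together with Cauchy--Schwarz to obtain the pointwise bound
\[ |v(x)|^2 \leq 2\,|v(s)|^2 + 2|r|\int_{-\delta}^{\delta}|\nabla v(\Phi(s,\rho))|^2\,d\rho. \]
Integrating this bound over $N_\delta$ with $\Phi$ as change of variables, the trace term contributes $\int_{\partial\Omega}\int_{-\delta}^{\delta}|v(s)|^2\,dr\,ds \lesssim h^2 \|v\|_{L^2(\partial\Omega)}^2$, which matches the first term on the right-hand side of the claim. Fubini combined with $\int_{-\delta}^{\delta}|r|\,dr \lesssim \delta^2 \lesssim h^4$ reduces the gradient contribution to $h^4\|\nabla v\|_{L^2(N_\delta)}^2$.

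The main obstacle I anticipate is the final identification $\|\nabla v\|_{L^2(N_\delta)}\lesssim \|\nabla v\|_{L^2(\Omega)}$ that is required to match the statement. The portion of $N_\delta$ lying outside $\Omega$ is an $O(h^2)$-wide strip contained in $\Omega_h\setminus\Omega$, over which the $H^1(\Omega\cup\Omega_h)$ regularity of $v$ does not by itself control $\|\nabla v\|_{L^2}$ by the interior gradient. To bypass this, I would orient the normal ray in the pointwise step so that whenever $x\in\Omega_h\setminus\Omega$ the path of integration starts from an interior point $\Phi(s,-\delta)\in\Omega$, keeping the bulk of the gradient integration inside $\Omega$, and absorb the residual $O(h^2)$-thin outside contribution by a reflection extension across $\partial\Omega$ together with the fact that the trace of $v$ on $\partial\Omega$ agrees from both sides. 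Once this technical point is resolved, combining the two contributions yields the stated bound.
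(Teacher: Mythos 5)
The paper itself does not prove this lemma; it quotes it from \cite[Lemma~2]{BK94}, and your tubular-neighbourhood argument with the fundamental theorem of calculus along normal fibres is exactly the standard route for such boundary-skin estimates. Your core computation is sound: the trace term contributes $2\delta\|v\|^2_{L^2(\partial\Omega)}\lesssim h^2\|v\|^2_{L^2(\partial\Omega)}$ and the gradient term picks up the factor $\delta^2\lesssim h^4$. (Two minor caveats: the inclusion $(\Omega\setminus\Omega_h)\cup(\Omega_h\setminus\Omega)\subset N_{Ch^2}(\partial\Omega)$ uses the intended construction of $\Omega_h$ and not just \eqref{ext} verbatim, and parts of a full normal fibre $\{\Phi(s,\rho):|\rho|<\delta\}$ may leave $\Omega\cup\Omega_h$, where $v$ is undefined, so one should integrate only along the sub-segment joining $x$ to its foot point, or first extend $v$.)

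The step you flag as the remaining obstacle is, however, a genuine gap, and the patch you propose (starting the ray at an interior point, plus a reflection extension and equality of traces) cannot close it. The value of $v$ at $x\in\Omega_h\setminus\Omega$ is linked to its trace on $\partial\Omega$ only through $\nabla v$ on the outside strip: a reflection of $v|_\Omega$ is a \emph{different} function that merely shares the trace, and the discrepancy between the two on the strip is again controlled only by $\|\nabla v\|_{L^2(\Omega_h\setminus\Omega)}$, which does not appear on the right-hand side. In fact no argument can succeed with $\int_\Omega|\nabla v|^2$ alone: take $\Omega$ non-convex (so that $\Omega_h\setminus\Omega$ has positive measure under \eqref{ext}), set $v\equiv 0$ in $\Omega$ and $v(x)=\dis(x,\partial\Omega)$ in $\Omega_h\setminus\Omega$; then $v\in H^1(\Omega\cup\Omega_h)$, the right-hand side of the stated estimate vanishes, but the left-hand side does not. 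The estimate that is actually provable --- and is the form in \cite{BK94} as well as the form used later in the paper, where the lemma is only ever applied to globally defined functions such as $Ez$ or $u$ --- carries $\|\nabla v\|^2$ over the boundary skin $(\Omega\setminus\Omega_h)\cup(\Omega_h\setminus\Omega)$ (equivalently over $\Omega\cup\Omega_h$, or over $\Omega$ after composing with a bounded extension operator). With that right-hand side your computation already finishes the proof, and the reflection device is unnecessary; as literally stated, the inequality you set out to prove is not correct for arbitrary $v\in H^1(\Omega\cup\Omega_h)$.
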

\noindent Following \cite{BIO} we first discretize in space only. To
take into account the mismatch between $\Omega_h$ and $\Omega$ we use
the stable extension operator \cite{stein1970}, $E:H^s(\mathcal{M}) \to H^s(\mathcal{M}_h)$, $s \ge 0$ with
$\mathcal{M}_h:= (\Omega\cup\Omega_h) \times (0,T)$ to
define the extended source function 
$
f^e =  E f, \quad f^e \vert_\Omega = f.
$
We may then write a semi-discrete finite element formulation of the problem as follows. Find $u \in C^2(0,T;V_h)$ such that 
$$ (\partial^2_{t} u , v)_h +a_h(u,v)=(f^e,v)_h, \hspace{5mm} \forall v \in V_h,$$
where
$$(u,v)_h= \int_{\Omega_h} uv \,dx ,\hspace{5mm} a_h(u,v)=\int_{\Omega_h} \nabla u \cdot \nabla v \,dx.$$
We also define
$$(u,v)_{\Omega}= \int_{\Omega} uv \,dx ,\hspace{5mm} a(u,v)=\int_{\Omega} \nabla u \cdot \nabla v \,dx.$$
\noindent Let $ N \in \mathbb{N}$ and $\tau>0$ satisfy $N \tau = T$ and define $t_n=n\tau$. Furthermore, define for each discrete function $u =(u^n)_{n=0}^N \in V_h^{N+1}$,
$$\partial_{\tau} u^n = \frac{u^n - u^{n-1}}{\tau}\quad \text{for} \quad n\in\{1,\ldots,N\} \quad \quad \partial^2_{\tau} u^n = \frac{u^n - 2u^{n-1}+u^{n-2}}{\tau^2}\quad \text{for} \quad n\in\{2,\ldots,N\}.$$
\noindent It is natural to assume that the two
discretization scales $\tau$ and $h$ should be comparable in size. We
will therefore assume throughout the paper that $\tau =
\mathcal{O}(h)$. To allow for a discrete set $\omega_h$ we assume
that data $q^n$ are known in the possibly (slightly) larger domain $\omega \cup \omega_h$.\\
\\
Consider the Lagrangian functional $\mathcal{L}: V_h^{N+1} \times V_{h}^{N-1} \to \mathbb{R}$ defined by:
\begin{equation} \label{Lagrangian}
\begin{split}
\mathcal{L}(u,z) & = \frac{\tau}{2} \sum_{n=1}^N \|u^n-q^n\|_{\omega_h}^2+G(u,z) -\tau \sum_{n=2}^N (f^n,z^n)_h\\ 
&+ \frac{1}{2} \|h\nabla u^1\|_h^2+\frac{1}{2} \|h\partial_{\tau} u^1\|_h^2+\frac{1}{2}\|h\nabla \partial_\tau u^1\|_h^2+\frac{1}{2}\|h\nabla \partial_\tau u^N\|_h^2+\frac{\tau}{2} \sum_{n=2}^N \|\tau \nabla \partial_{\tau}u^n\|_h^2, \\
G(u,z)&= \tau\sum_{n=2}^N(( \partial_{\tau}^2 u^n,z^n)_h+a_h(u^n,z^n)),
\end{split}
\end{equation}
for fixed functions $f \in C(0,T;L^2(\Omega_h))$ and $q \in
C(0,T;L^2(\omega_h))$,
$$f^n = f^e(t_n), \hspace{5mm} q^n = q(t_n), \hspace{5mm} n=1,...,N.$$ 
Define the bilinear forms $A_1$ and $A_2$ as follows:
\begin{equation} \label{bilinear}
\begin{split}
A_1(u,w) & = G(u,w), \\ 
 A_2((u,z),v) &= \tau \sum_{n=1}^N (u^n,v^n)_{\omega_h}+G(v,z) +
 (h\nabla u^1,h\nabla v^1)_h+(h\partial_{\tau} u^1,h\partial_{\tau}v^1)_h 
\\
&+(h\nabla \partial_\tau u^N, h \nabla \partial_\tau v^N)_h+(h\nabla \partial_\tau u^1, h \nabla \partial_\tau v^1)_h+\tau \sum_{n=2}^N (\tau \nabla \partial_{\tau}u^n, \tau \nabla\partial_{\tau}v^n)_h.
\end{split}
\end{equation}
Then the Euler-Lagrange equations for $\mathcal{L}$ are equivalent to:
\begin{equation} \label{EL}
A_1(u,w) =\tau \sum_2^N (f^n,w^n)_h \quad \text{and} \quad  A_2((u,z),v) = \tau \sum_1^N (q^n,v^n)_{\omega_h}.
\end{equation}

\subsection{Coercivity}
We let $z^0=z^1=z^{N+1}=z^{N+2}=0$ and define the following norms and seminorms:
\begin{equation} \label{norms}
\begin{split}
|||u|||_R^2 &=\tau \sum_{n=1}^N \|u^n\|_{\omega_h}^2 + \|h\nabla u^1\|_h^2+\|h\partial_{\tau}u^1\|_h^2   \\
&+\|h\nabla \partial_\tau u^1\|_h^2+\|h\nabla \partial_\tau u^N\|_h^2+\tau \sum_{n=2}^N \|\tau \nabla \partial_\tau u^n\|_h^2,\\ 
|||u|||_F^2 &= \tau \sum_{n=2}^N (\|\partial_{\tau}^2 u^n\|_h^2+\|\partial_\tau u^n\|_h^2 + \|\nabla u^n\|_h^2) + \|\nabla u^N\|_h^2 + \| \partial_\tau u^N\|_h^2,\\
|||z|||_D^2 &=  \frac{T}{2} \|z^N\|_h^2+ \frac{\tau}{4} \sum_{n=2}^N \|z^n\|_h^2 + \frac{\tau}{2(T+1)^2}\sum_{n=2}^N \|\nabla \tilde{z}^n\|_h^2+ \frac{1}{4(T+1)}\|\nabla \tilde{z}^N\|_h^2,\\
|||(u,z)|||^2_C &=|||u|||_R^2 + \tau \sum_{n=2}^N \|z^n\|_h^2.\\  
\end{split}
\end{equation}
Here $\tilde{z}^n := \tau \sum_{m=0}^n (1+m\tau) z^m$. Note that using the Poincar\'{e} inequality we have the following:
$$ \|\nabla \tilde{z}^n\|_h \geq C \|\tilde{z}^n\|_h \hspace{5mm} n=1,2,...,N.$$
We  also note that $||| (\cdot,\cdot)|||_C$ is a norm on $V_h^{2N}$.
\begin{prop}[Coercivity estimate]
\label{estimate}
For all $ N \in \mathbb{N}$, $h>0$ and $(u,z) \in V_h^{2N}$ there exists $(v,w) \in V_h^{2N}$ such that:
$$|||u|||_R^2 +h^2|||u|||_F^2 + |||z|||_D^2 \lesssim A_1(u,w) + A_2((u,z),v),$$
$$ |||(v,w)|||_C \lesssim |||u|||_R +  h |||u|||_F + |||z|||_D.$$
\end{prop}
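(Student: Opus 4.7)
The plan is to take the test pair in the split form $(v,w)=(u+v_D,\,-z+w_F)$, where the base pair $(u,-z)$ extracts $|||u|||_R^2$, and the auxiliary pieces $v_D$, $w_F$ extract $|||z|||_D^2$ and $h^2|||u|||_F^2$ respectively. Writing $S(u,v)$ for the five stabilization terms in $A_2((u,z),v)$ that involve only $u$ and $v$, the cancellation $G(u,-z)+G(u,z)=0$ gives immediately
\begin{equation*}
A_1(u,-z)+A_2((u,z),u)=\tau\sum_{n=1}^{N}\|u^n\|_{\omega_h}^2+S(u,u)=|||u|||_R^2,
\end{equation*}
since $S(u,u)$ is, by construction, exactly the non-data portion of $|||u|||_R^2$.

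For the dual part I would take $v_D=\tilde z$; this is admissible since $\tilde z^n\in V_h$ and $\tilde z^0=\tilde z^1=0$. The key identity $\partial_\tau\tilde z^n=(1+n\tau)z^n$ allows discrete summation by parts in $G(\tilde z,z)$. In the temporal piece $\tau\sum(\partial_\tau^2\tilde z^n,z^n)_h$, telescoping yields $\tfrac{1+T}{2}\|z^N\|_h^2+\tfrac{\tau}{2}\sum_{n=2}^N\|z^n\|_h^2$ plus a nonnegative $\tfrac{\tau^2}{2}\sum(1+(n-1)\tau)\|\partial_\tau z^n\|^2$ remainder. In the spatial piece $\tau\sum a_h(\tilde z^n,z^n)$, writing $\nabla z^n=(1+n\tau)^{-1}\partial_\tau\nabla\tilde z^n$, applying the discrete product rule $(a^n,\partial_\tau a^n)_h=(2\tau)^{-1}(\|a^n\|^2-\|a^{n-1}\|^2)+\tfrac{\tau}{2}\|\partial_\tau a^n\|^2$ to $a^n=\nabla\tilde z^n$, and Abel-summing the $(1+n\tau)^{-1}$ weights produces $\frac{1}{2(1+T)}\|\nabla\tilde z^N\|^2+\sum_{n=2}^{N-1}\frac{\tau}{2(1+n\tau)(1+(n+1)\tau)}\|\nabla\tilde z^n\|^2$, which dominates the $\frac{1}{4(T+1)}\|\nabla\tilde z^N\|^2$ and $\frac{\tau}{2(T+1)^2}\sum\|\nabla\tilde z^n\|^2$ parts of $|||z|||_D^2$ after splitting the boundary coefficient (and using $\tau\ll T+1$ to absorb the missing $n=N$ term of the bulk sum). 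The cross terms $\tau\sum(u^n,\tilde z^n)_{\omega_h}$ and $S(u,\tilde z)$ are then dispatched by Young's inequality into $|||u|||_R^2+\tfrac{1}{2}|||z|||_D^2$.

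For $h^2|||u|||_F^2$ most components are already controlled by $|||u|||_R^2$: Poincar\'e gives $h^2\|\partial_\tau u^N\|^2\lesssim\|h\nabla\partial_\tau u^N\|^2$ and $h^2\tau\sum\|\partial_\tau u^n\|^2\lesssim\tau\sum\|h\nabla\partial_\tau u^n\|^2\sim\tau\sum\|\tau\nabla\partial_\tau u^n\|^2$ (using $\tau\sim h$), while the telescoping $\nabla u^n=\nabla u^1+\tau\sum_{m\leq n}\nabla\partial_\tau u^m$ bounds $h^2\|\nabla u^N\|^2$ and $h^2\tau\sum\|\nabla u^n\|^2$ in terms of $\|h\nabla u^1\|^2$ and $\tau\sum\|\tau\nabla\partial_\tau u^n\|^2$. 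For the only remaining component $h^2\tau\sum\|\partial_\tau^2 u^n\|^2$ I would set $w_F^n=h^2\partial_\tau^2 u^n$, so that $A_1(u,w_F)=h^2\tau\sum\|\partial_\tau^2 u^n\|^2+h^2\tau\sum a_h(u^n,\partial_\tau^2 u^n)$; the first term is exactly what is needed, while the spatial byproduct, after one discrete summation by parts in time against $\partial_\tau\nabla u^n$, reduces to boundary contributions controlled by the regularization ($\|h\nabla\partial_\tau u^1\|^2$, $\|h\nabla\partial_\tau u^N\|^2$, $\tau\sum\|\tau\nabla\partial_\tau u^n\|^2$) plus a small multiple of $h^2|||u|||_F^2$ that is absorbed into the left-hand side.

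Stability of the test pair follows by the triangle inequality: $|||v|||_R\leq|||u|||_R+|||\tilde z|||_R\lesssim|||u|||_R+|||z|||_D$, where the second step uses Poincar\'e ($\|\tilde z^n\|_h\lesssim\|\nabla\tilde z^n\|_h$), the vanishing initial values $\tilde z^0=\tilde z^1=0$, $\partial_\tau\tilde z^n=(1+n\tau)z^n$, and the inverse estimate to convert the $\|\nabla z^n\|$ factors appearing in the regularization of $\tilde z$ back to $\|z^n\|_h$. For $w$, $\tau\sum\|w^n\|_h^2\leq 2\tau\sum\|z^n\|_h^2+2h^4\tau\sum\|\partial_\tau^2 u^n\|_h^2\lesssim|||z|||_D^2+h^2\cdot h^2|||u|||_F^2$, which gives $|||(v,w)|||_C\lesssim|||u|||_R+h|||u|||_F+|||z|||_D$. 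The hardest step will be the dual estimate: the interplay between the $(1+n\tau)$ weight and the two successive summations by parts, and the need to match the four $(T+1)$-dependent prefactors in $|||z|||_D^2$ while keeping every remainder either nonnegative or absorbable into $|||u|||_R^2+\tfrac{1}{2}|||z|||_D^2$, is the central bookkeeping challenge.
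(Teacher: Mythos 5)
Your overall architecture is the same as the paper's: pair $(u,-z)$ to extract $|||u|||_R^2$, use the weighted primitive $\tilde z^n=\tau\sum_{m\le n}(1+m\tau)z^m$ as the dual multiplier to extract $|||z|||_D^2$ (this is exactly the paper's Lemma on $G(\tilde z,z)\ge|||z|||_D^2$), and use a multiplier built from $\partial_\tau^2 u^n$, scaled by $h^2$, for the $F$-norm. Your one genuine variation — taking only $w_F^n=h^2\partial_\tau^2u^n$ instead of the paper's $h^2(\partial_\tau^2u^n+(2T-n\tau)\partial_\tau u^n)$, and recovering the remaining components of $h^2|||u|||_F^2$ directly from $|||u|||_R^2$ via Poincar\'e, telescoping and $\tau\sim h$ — is legitimate (the paper's own treatment of its term $S_4$ also silently needs $h\lesssim\tau$), and the stability bound for $|||(v,w)|||_C$ is argued correctly, including the inverse estimate that the paper leaves implicit.

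However, there is a genuine gap in the final assembly: you take the test pair with unit coefficients, $(v,w)=(u+\tilde z,\,-z+w_F)$, and claim the cross terms $\tau\sum_n(u^n,\tilde z^n)_{\omega_h}$, $(h\nabla\partial_\tau u^N,h\nabla\partial_\tau\tilde z^N)_h$, $\tau\sum_n(\tau\nabla\partial_\tau u^n,\tau\nabla\partial_\tau\tilde z^n)_h$ are ``dispatched by Young's inequality into $|||u|||_R^2+\tfrac12|||z|||_D^2$''. This cannot work as stated. Each of these cross terms is of size $|||u|||_R\cdot C(T)\,|||z|||_D$ with $C(T)\sim C_P(T+1)$ (e.g.\ $\tau\sum\|\tilde z^n\|^2\lesssim(T+1)^2\tau\sum\|\nabla\tilde z^n\|^2$, and the $D$-norm carries the small prefactor $\tau/(2(T+1)^2)$ on that sum), while the positive terms $|||u|||_R^2$ and $|||z|||_D^2$ are each available with coefficient exactly $1$. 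A bound $C\,ab\le\tfrac12 a^2+\tfrac12 b^2$ requires $C\le1$; since here $C(T)>1$, any Young split either destroys the $|||u|||_R^2$ term (coefficient $\ge1$ on the $u$-side) or overshoots the available fraction of $|||z|||_D^2$, so the resulting lower bound is not a positive multiple of $|||u|||_R^2+h^2|||u|||_F^2+|||z|||_D^2$. The missing ingredient is the relaxation parameter the paper uses: take $v=u+\alpha\tilde z$ and $w=-z+\alpha h^2\partial_\tau^2u$ with $\alpha>0$ small (depending on $T$ and the Poincar\'e/inverse constants, not on $h$). Then the cross terms carry the factor $\alpha$, the $z$-sides are absorbed into $\tfrac{\alpha}{2}|||z|||_D^2$ with a fixed Young parameter depending only on $T$, and the $u$-sides, now of size $O(\alpha)|||u|||_R^2$, are absorbed into $\tfrac12|||u|||_R^2$; the same scaling also fixes the absorption of the small multiple of $h^2\|\nabla u^N\|^2$ arising from your treatment of $h^2\tau\sum a_h(u^n,\partial_\tau^2u^n)$. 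With this modification your argument goes through and both claimed inequalities follow.
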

\noindent Before proving this proposition, let us state a few lemmas. The first lemma is trivial.
\begin{lem}
$$ A_1(u,-z) + A_2((u,z),u) = |||u|||_R^2.$$
\end{lem}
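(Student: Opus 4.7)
The plan is simply to unfold the definitions of $A_1$, $A_2$, and $|||\cdot|||_R$ and observe a cancellation. Since $A_1(u,w)=G(u,w)$ is linear in its second argument, we have $A_1(u,-z)=-G(u,z)$. Meanwhile, substituting $v=u$ into the definition (\ref{bilinear}) of $A_2$ gives
\begin{align*}
A_2((u,z),u) &= \tau\sum_{n=1}^N(u^n,u^n)_{\omega_h} + G(u,z) + (h\nabla u^1,h\nabla u^1)_h + (h\partial_\tau u^1,h\partial_\tau u^1)_h \\
&\quad + (h\nabla\partial_\tau u^N,h\nabla\partial_\tau u^N)_h + (h\nabla\partial_\tau u^1,h\nabla\partial_\tau u^1)_h + \tau\sum_{n=2}^N(\tau\nabla\partial_\tau u^n,\tau\nabla\partial_\tau u^n)_h.
\end{align*}

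Adding $A_1(u,-z)$ to this expression makes the two copies of $G(u,z)$ cancel exactly, leaving a sum of diagonal inner products $(\cdot,\cdot)_\bullet$ with themselves. Each such diagonal inner product equals the square of the corresponding norm (or seminorm), so the expression reduces termwise to
\begin{equation*}
\tau\sum_{n=1}^N\|u^n\|_{\omega_h}^2 + \|h\nabla u^1\|_h^2 + \|h\partial_\tau u^1\|_h^2 + \|h\nabla\partial_\tau u^1\|_h^2 + \|h\nabla\partial_\tau u^N\|_h^2 + \tau\sum_{n=2}^N\|\tau\nabla\partial_\tau u^n\|_h^2,
\end{equation*}
which matches the definition of $|||u|||_R^2$ in (\ref{norms}) identically.

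There is no genuine obstacle here; the identity is engineered into the definitions of $A_1$ and $A_2$, where the off-diagonal bilinear form $G$ enters $A_2((u,z),v)$ only through its second slot $z$, so that testing with $v=u$ and $w=-z$ forces the dual variable to disappear. The only care required is bookkeeping: verifying the index ranges ($n=1,\dots,N$ for the observation term, $n=2,\dots,N$ for the time-difference penalty) match between (\ref{bilinear}) and (\ref{norms}), and checking the sign on the $G$ pairing. Both checks are immediate.
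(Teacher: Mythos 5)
Your proof is correct and matches the paper's treatment: the paper simply declares this lemma trivial, and your argument is exactly the definition-unfolding cancellation (the $G(u,z)$ term from $A_2((u,z),u)$ cancels against $A_1(u,-z)=-G(u,z)$, leaving the diagonal terms that constitute $|||u|||_R^2$) that justifies that claim.
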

\begin{rmk}
Observe that the form $A_2$ can be reduced by dropping the term
$(h\partial_{\tau} u^1,h\partial_{\tau}v^1)_h$ without sacrificing
stability since the contribution from this term is controlled by
$\|h \partial_{\tau} \nabla u^1\|^2_h$ by a Poincar\'e inequality.
\end{rmk}
\begin{lem}
\label{energy1}
Let $u \in V_h^{N+1}$. For $ n=2,3,...,N$ define:
$$ w^n :=\partial^2_{\tau} u^n+ (2T-n\tau) \partial_\tau u^n.$$
Then:
$$A_1(u, h^2 w) \geq C_1 h^2 |||u|||_F^2 - C_0 |||u|||_R^2,$$
where $C_0,C_1$ are constants that are independent of the parameter $h$. \\
\end{lem}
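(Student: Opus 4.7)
The plan is to execute a discrete analogue of the continuum multiplier trick for $\Box u = f$, in which one tests against $\partial_t^2 u + (2T-t)\partial_t u$. The first multiplier produces $\int|\partial_t^2 u|^2$ together with a parasitic $\int|\partial_t\nabla u|^2$ and a time boundary term after integration by parts in space, while the weighted multiplier $(2T-t)\partial_t u$ yields the integrated energy $\int_0^T E(t)\,dt$ and a terminal trace $T\,E(T)$. The factor $h^2$ in front of $w$ is crucial: it is exactly what makes the parasitic $|\partial_t\nabla u|^2$ piece and the initial-time boundary traces compatible with the scaling of $|||\cdot|||_R$.

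Concretely, I would expand
\begin{align*}
A_1(u,h^2 w) &= h^2\tau\sum_{n=2}^N \|\partial_\tau^2 u^n\|_h^2 + h^2\tau\sum_{n=2}^N a_h(u^n,\partial_\tau^2 u^n)\\
&\quad{}+ h^2\tau\sum_{n=2}^N(2T-n\tau)\bigl[(\partial_\tau^2 u^n,\partial_\tau u^n)_h + a_h(u^n,\partial_\tau u^n)\bigr].
\end{align*}
For the weighted sums I apply the discrete product rules $2\tau(\partial_\tau^2 u^n,\partial_\tau u^n)_h = \|\partial_\tau u^n\|_h^2 - \|\partial_\tau u^{n-1}\|_h^2 + \tau^2\|\partial_\tau^2 u^n\|_h^2$ and the analogous identity for $a_h(u^n,\partial_\tau u^n)$, and then perform Abel summation in $n$ against the weight $(2T-n\tau)$, whose forward difference equals $\tau$. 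This produces the positive terminal traces $\tfrac12 h^2 T\bigl(\|\partial_\tau u^N\|_h^2 + \|\nabla u^N\|_h^2\bigr)$, the bulk pieces $\tfrac12 h^2\tau\sum_{n=2}^{N-1}\bigl(\|\partial_\tau u^n\|_h^2 + \|\nabla u^n\|_h^2\bigr)$, and initial remainders of the form $\|h\partial_\tau u^1\|_h^2$ and $\|h\nabla u^1\|_h^2$, both already inside $|||u|||_R^2$. For the remaining sum $h^2\tau\sum a_h(u^n,\partial_\tau^2 u^n)$ I summation-by-parts in time,
\[
\tau\sum_{n=2}^N a_h(u^n,\partial_\tau^2 u^n) = a_h(u^N,\partial_\tau u^N) - a_h(u^2,\partial_\tau u^1) - \tau\sum_{n=3}^N a_h(\partial_\tau u^n,\partial_\tau u^{n-1}),
\]
and polarize via $2 a_h(\partial_\tau u^n,\partial_\tau u^{n-1}) = \|\nabla\partial_\tau u^n\|_h^2 + \|\nabla\partial_\tau u^{n-1}\|_h^2 - \tau^2\|\nabla\partial_\tau^2 u^n\|_h^2$; the $\tau^2$ piece is nonnegative and can be discarded.

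The main obstacle will be absorbing the leftover bulk contribution $-\tfrac12 h^2\tau\sum\|\nabla\partial_\tau u^n\|_h^2$ into $|||u|||_R^2$. This is precisely where the hypothesis that $\tau$ and $h$ are comparable is essential: under $h\lesssim\tau$ one has $h^2\|\nabla\partial_\tau u^n\|_h^2 \lesssim \|\tau\nabla\partial_\tau u^n\|_h^2$, so the bad sum is bounded by $\tau\sum\|\tau\nabla\partial_\tau u^n\|_h^2 \leq |||u|||_R^2$. The terminal trace is controlled by Young's inequality, $h^2 a_h(u^N,\partial_\tau u^N) \geq -\varepsilon h^2\|\nabla u^N\|_h^2 - C_\varepsilon\|h\nabla\partial_\tau u^N\|_h^2$, with $\varepsilon$ chosen small enough to be reabsorbed into the good $\tfrac12 h^2 T\|\nabla u^N\|_h^2$ and the $C_\varepsilon$ term landing in $|||u|||_R^2$; the other trace $h^2 a_h(u^2,\partial_\tau u^1)$ is handled by writing $u^2 = u^1 + \tau\partial_\tau u^2$, so that it decomposes into pieces each bounded by $\|h\nabla u^1\|_h^2$, $\|h\nabla\partial_\tau u^1\|_h^2$, or $\|\tau\nabla\partial_\tau u^2\|_h^2$. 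Finally, the bulk $|||u|||_F^2$ sums are completed up to $n=N$ by using $\tau\|\partial_\tau u^N\|_h^2 \leq T\|\partial_\tau u^N\|_h^2$ (and analogously for $\nabla u^N$) against the surplus in the terminal traces, giving the claimed inequality with $C_0,C_1>0$ depending only on $T$.
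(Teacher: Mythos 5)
Your proposal is correct and follows essentially the same route as the paper's proof: the same splitting of $G(u,h^2w)$ into the four multiplier contributions, the same discrete product-rule/Abel-summation identities for the $(2T-n\tau)$-weighted terms, the same time summation by parts for $\tau\sum_{n} a_h(u^n,\partial_\tau^2 u^n)$, and the same absorption of the leftover $h^2\tau\sum_n\|\nabla\partial_\tau u^n\|_h^2$ and of the initial/terminal traces into $|||u|||_R^2$ via the comparability $\tau\sim h$ and Young's inequality. Only trivial bookkeeping differs (your boundary term sits at $a_h(u^2,\partial_\tau u^1)$ rather than $a_h(u^1,\partial_\tau u^1)$, the forward difference of $2T-n\tau$ is $-\tau$ rather than $\tau$, and the piece $h^2\tau\, a_h(\partial_\tau u^2,\partial_\tau u^1)$ should be recorded as $\tau\|\tau\nabla\partial_\tau u^2\|_h^2$ so that it literally matches a term of $|||u|||_R^2$), none of which affects the argument.
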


\begin{proof}
Recall that $A_1(u,h^2w) = h^2 G(u,w)$. Now, given the choice of the test function $w$ we have 
$$ G(u,w)=S_1+S_2+S_3+S_4,$$
where:
$$S_1 = \tau \sum_{n=2}^N \|\partial^2_{\tau} u^n\|_h^2, $$
$$S_2=\tau \sum_{n=2}^N (\partial^2_{\tau}u^n, (2T-n\tau) \partial_\tau u^n)_h$$
$$=\tau \sum_{n=2}^N (2T-n\tau) (\partial_\tau v^n, v^n)_h= \sum_{n=2}^N (2T-n\tau) \frac{1}{2}(\|v^n\|_h^2-\|v^{n-1}\|_h^2+\|v^n-v^{n-1}\|_h^2)$$
$$ \geq \frac{T}{4}\|\partial_\tau u_N\|_h^2+\frac{\tau}{2} \sum_{n=1}^N \|\partial_\tau u^n\|_h^2 - T \|\partial_\tau u^1\|_h^2 + \frac{T}{2}\tau^2 \sum_{n=2}^N \|\partial^2_\tau u^n\|_h^2,$$
$$S_3=\tau \sum_{n=2}^N a_h(u^n, (2T-n\tau) \partial_\tau u^n)= \sum_{n=2}^N (2T-n\tau) a_h(u^n, u^n-u^{n-1})$$
$$=  \sum_{n=2}^N (2T-n\tau)\frac{1}{2}( a_h(u^n, u^n)-a_h(u^{n-1},u^{n-1})+ a_h(u^n-u^{n-1},u^n-u^{n-1}))$$
$$ \geq \frac{T}{4}\|\nabla u^N\|_h^2 +\frac{\tau}{2} \sum_2^N \|\nabla u^n\|_h^2 - T \|\nabla u^1\|_h^2 + \frac{\tau^2}{2} \sum_{n=2}^N \|\tau \nabla \partial_\tau u^n\|_h^2,$$
$$S_4=\tau \sum_{n=2}^N a_h(u^n, \partial^2_\tau u^n) $$
$$=-\tau \sum_{n=2}^N (\nabla \partial_\tau u^{n-1}, \nabla \partial_\tau u^{n})_h- (\partial_\tau \nabla u^1, \nabla u^1)_h + (\nabla u^N, \partial_\tau \nabla u^N)_h.$$
Hence:
$$|S_4| \leq \tau \sum_{n=1}^N \| \nabla \partial_\tau u^n\|_h^2+\frac{1}{2}\|\partial_\tau \nabla u^1\|_h^2+\frac{1}{2}\|\nabla u^1\|_h^2+\frac{\delta}{2}\|\nabla u^N\|_h^2+\frac{1}{2\delta}\|\partial_\tau \nabla u^N\|_h^2.$$
\noindent One can see that by combining the above estimates the claim follows immediately for $\delta$ sufficiently small.
\end{proof}
\begin{lem}
\label{energy2}
Let $z \in V_h^{N-1}$. For $n=0,1,...,N$ define:
$$ v^n = \tau \sum_{m=0}^n (1 + m \tau) z^m := \tilde{z}^n.$$
Then:
$$ G(v,z) \geq |||z|||_D^2.$$
\end{lem}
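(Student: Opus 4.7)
The plan is to use $v = \tilde z$ to reduce $G(v,z)$ to two manifestly positive contributions, one from the discrete time derivative and one from the spatial stiffness, each matching parts of $|||z|||_D^2$. The two main tools are the polarization identity $2(a,b)_h = \|a\|_h^2 + \|b\|_h^2 - \|a-b\|_h^2$ and weighted discrete Abel summation, with the boundary condition $z^0=z^1=0$ producing $\tilde z^0 = \tilde z^1 = 0$. Since $v^n - v^{n-1} = \tau(1+n\tau)z^n$, one has $\partial_\tau v^n = (1+n\tau)z^n$, and by the discrete Leibniz rule
\begin{equation*}
\partial_\tau^2 v^n \;=\; (1+n\tau)\,\partial_\tau z^n + z^{n-1}.
\end{equation*}
I would then split $G(v,z) = I_1 + I_2$ with $I_1 := \tau\sum_{n=2}^N (\partial_\tau^2 v^n, z^n)_h$ and $I_2 := \tau\sum_{n=2}^N a_h(v^n, z^n)$, treating each separately.

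For $I_1$, substituting the Leibniz identity gives $I_1 = \sum_{n=2}^N (1+n\tau)(z^n - z^{n-1}, z^n)_h + \tau\sum_{n=2}^N (z^{n-1}, z^n)_h$. Polarizing both inner products, the $\|z^n - z^{n-1}\|_h^2$ remainders combine with net coefficient $\tfrac12(1 + (n-1)\tau) \geq 0$ and can be discarded. The surviving telescoping sum $\tfrac12 \sum(1+n\tau)(\|z^n\|_h^2 - \|z^{n-1}\|_h^2)$, resolved by weighted Abel summation and $z^1 = 0$, equals $\tfrac{1+T}{2}\|z^N\|_h^2 - \tfrac\tau2 \sum_{n=2}^{N-1}\|z^n\|_h^2$; the negative interior piece is absorbed by the $\tfrac\tau2 \sum(\|z^n\|_h^2 + \|z^{n-1}\|_h^2)$ contribution from the cross term, giving after bookkeeping
\begin{equation*}
I_1 \;\geq\; \tfrac{T}{2}\|z^N\|_h^2 + \tfrac{\tau}{4}\sum_{n=2}^N \|z^n\|_h^2.
\end{equation*}

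For $I_2$, I would use $z^n = (1+n\tau)^{-1}\partial_\tau \tilde z^n$ to rewrite $a_h(v^n, z^n) = (1+n\tau)^{-1} a_h(\tilde z^n, \partial_\tau \tilde z^n)$. Polarizing then gives
\begin{equation*}
I_2 \;=\; \tfrac12 \sum_{n=2}^N (1+n\tau)^{-1}\bigl(\|\nabla \tilde z^n\|_h^2 - \|\nabla \tilde z^{n-1}\|_h^2\bigr) + \tfrac{\tau^2}{2}\sum_{n=2}^N (1+n\tau)\|\nabla z^n\|_h^2,
\end{equation*}
whose second term is dropped by positivity. Setting $b_n := (1+n\tau)^{-1}$ and applying weighted Abel summation with $\tilde z^1 = 0$, the first sum equals $\tfrac12 b_N\|\nabla \tilde z^N\|_h^2 + \tfrac12 \sum_{n=2}^{N-1}(b_n - b_{n+1})\|\nabla \tilde z^n\|_h^2$. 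Since $b_N = 1/(T+1)$ and $b_n - b_{n+1} = \tau/[(1+n\tau)(1+(n+1)\tau)] \geq \tau/(T+1)^2$, splitting $\tfrac12 b_N\|\nabla \tilde z^N\|_h^2$ supplies both the $\tfrac{1}{4(T+1)}\|\nabla \tilde z^N\|_h^2$ boundary contribution and the missing $n=N$ term of the interior sum, provided $T+1 \geq 2\tau$. Adding the lower bounds for $I_1$ and $I_2$ then matches $|||z|||_D^2$ term by term.

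The main obstacle is purely bookkeeping: aligning the weighted Abel boundary terms with the exact coefficients appearing in $|||z|||_D^2$, verifying that every square-difference remainder produced by polarization really has a positive net coefficient and can be discarded, and donating the appropriate fraction of the $n=N$ boundary contribution in $I_2$ to complete the interior sum over $n=2,\dots,N$. No new idea beyond the choice $v = \tilde z$ is required; that test function is tailored precisely so that the weights $(1+n\tau)$ and $(1+n\tau)^{-1}$ arising from Abel summation produce the factors $T$, $T+1$, $(T+1)^2$ featuring in $|||z|||_D^2$.
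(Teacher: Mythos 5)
Your proposal is correct and follows essentially the same route as the paper: the same splitting of $G(\tilde z,z)$ into the temporal and spatial sums, the same polarization identities, and the same weighted Abel summation with the weights $(1+n\tau)$ and $(1+n\tau)^{-1}$ together with $z^0=z^1=0$. The only cosmetic difference is that you polarize the cross term $\tau(z^{n-1},z^n)_h$ directly (which incidentally avoids the paper's implicit smallness requirement on $\tau$ in the step absorbing $-\tau^2\sum_n\|z^n\|_h^2$), whereas the paper rewrites it via $\partial_\tau z^n$; the substance of the argument is identical.
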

\begin{proof}
\noindent Note that:
\begin{align*}
\tau \sum_{n=2}^N ( \partial^2_{\tau} v^n ,z^n)_h &=\tau \sum_{n=2}^N (\partial_\tau ((1+n\tau)z^n),z^n)_h= \tau \sum_{n=2}^N (z^n,z^{n-1})_h + \sum_{n=2}^N (1+n\tau)(z^n-z^{n-1},z^n)_h\\
&= \tau \sum_{n=2}^N \|z^n\|_h^2-\tau^2 \sum_{n=2}^n (z^n,\partial_\tau z^n)_h + \frac{1}{2}\sum_{n=2}^N (1+n\tau)(\|z^n\|_h^2-\|z^{n-1}\|_h^2+\|z^n-z^{n-1}\|_h^2)\\ 
&\geq \frac{\tau}{2} \sum_{n=2}^N \|z^n\|_h^2 + \frac{T}{2} \|z^N\|_h^2 -\tau^2 \sum_{n=2}^n (z^n,\partial_\tau z^n)_h + \frac{\tau^2}{2} \sum_{n=2}^N \|\partial_\tau z^n\|_h^2\\
&\geq \frac{\tau}{2} \sum_{n=2}^N \|z^n\|_h^2 + \frac{T}{2} \|z^N\|_h^2 -\tau^2 \sum_{n=2}^n \|z^n\|_h^2 - \frac{\tau^2}{4} \sum_{n=2}^N \|\partial_\tau z^n\|_h^2 + \frac{\tau^2}{2} \sum_{n=2}^N \|\partial_\tau z^n\|_h^2\\
&\geq \frac{\tau}{4} \sum_{n=2}^N \|z^n\|_h^2 + \frac{T}{2} \|z^N\|_h^2+ \frac{\tau^2}{4} \sum_{n=2}^N \|\partial_\tau z^n\|_h^2.
\end{align*}
\noindent Similarly: 
\begin{align*}
\tau \sum_2^N a_h(v^n,z^n)&= \tau \sum_{n=2}^N \frac{1}{(1+n\tau)} a_h(v^n,\partial_\tau v^n)= \sum_{n=2}^N\frac{1}{(1+n\tau)} a_h(v^n,v^n-v^{n-1})\\
&= \frac{1}{2} \sum_{n=2}^N \frac{1}{1+n\tau}(a_h(v^n,v^n)-a_h(v^{n-1},v^{n-1})+a_h(v^n-v^{n-1},v^{n}-v^{n-1}))\\
&\geq  \frac{1}{2} \sum_{n=2}^N \frac{1}{1+n\tau}(a_h(v^n,v^n)-a_h(v^{n-1},v^{n-1}))\\
&\geq \frac{1}{2} \sum_{n=2}^N \frac{1}{1+n\tau}a_h(v^n,v^n) - \frac{1}{2} \sum_{n=2}^{N-1} \frac{1}{1+n\tau}a_h(v^n,v^n)\\
& + \frac{\tau}{2} \sum_{n=2}^N \frac{1}{(1+n\tau)(1+(n-1)\tau)}a_h(v^{n-1},v^{n-1})\\
& \geq \frac{1}{4(1+T)}a_h(v^N,v^N) + \frac{\tau}{2(1+T)^2} \sum_{n=2}^N a_h(v^{n},v^{n}).\\
\end{align*}
\noindent Combining the above inequalities yields the claim.
\end{proof}

\begin{proof}[Proof of Proposition ~\ref{estimate}]
\noindent Let $\alpha$ be a sufficiently small parameter independent of $h$ and let us define
$$ \hat{v} = u + \alpha v, \quad \text{and} \quad \hat{w}= -z+ h^2\alpha w,$$
where $w,v$ are chosen as in Lemma ~\ref{energy1} and Lemma ~\ref{energy2} respectively. We will show that the claim holds for this specific choice of $(\hat{v},\hat{w}) \in V_h^{2N}$. Indeed,
\begin{equation} 
\begin{split}
A_1(u,\hat{w})+ A_2((u,z),\hat{v}) &= |||u|||_R^2 + \alpha A_1 (u,h^2 w) + \alpha A_2((u,z),v)\\
&\geq |||u|||_R^2 + \alpha C_1 h^2 |||u|||_F^2 - \alpha C_0 |||u|||_R^2+\alpha A_2((u,z),v)\\
& \geq  \frac{1}{2}( |||u|||_R^2 + \alpha C_1 h^2 |||u|||_F^2)+\alpha A_2((u,z),v).
\end{split}
\end{equation}
Now recalling that $v^0=v^1=0$, we see that:
\begin{align*}
&A_2((u,z),v)= G(v,z) +  \tau \sum_{n=1}^N (u^n,v^n)_{\omega_h}+(h\nabla \partial_\tau u^N, h \nabla \partial_\tau v^N)_h+\tau \sum_{n=2}^N (\tau \nabla \partial_{\tau}u^n, \tau \nabla\partial_{\tau}v^n)_h\\
&\geq  |||z|||_D^2 +   \tau \sum_{n=1}^N (u^n,v^n)_{\omega_h}+(h\nabla \partial_\tau u^N, h \nabla \partial_\tau v^N)_h+\tau \sum_{n=2}^N (\tau \nabla \partial_{\tau}u^n, \tau \nabla\partial_{\tau}v^n)_h.
\end{align*}
\noindent We have:
$$  \sum_{n=2}^N(\tau \nabla \partial_\tau u^n, \tau \nabla \partial_\tau v^n)_h \leq \sum_{n=2}^N (\frac{1}{2\delta}\|\tau \nabla \partial_\tau u^n\|_h^2 + C\frac{\delta}{2}\|\nabla \tilde{z}^n\|_h^2).$$

\noindent Similarly using the Cauchy-Schwarz inequality we have:
$$ \sum_{n=1}^N (u^n,v^n)_{\omega_h} \leq \sum_{n=1}^N  (\frac{1}{2\delta} \|u^n\|_{\omega_h}^2 + \frac{\delta}{2}\|v^n\|_h^2).$$
$$|(h \nabla \partial_\tau u^N,h\nabla \partial_\tau v^N)_h| \leq \frac{1}{2\delta}\|h\nabla \partial_\tau u^N\|_h^2 + 4T^2\frac{\delta}{2}\|z^N\|_h^2.$$
\noindent One can easily see that the estimate holds for $\alpha$ small enough and $\delta$ sufficiently smaller than $\alpha$. A similar argument yields the following estimate:
$$ |||(\hat{v},\hat{w})|||_C \lesssim |||u|||_R + C h |||u|||_F + C|||z|||_D.$$
Indeed we have:
\begin{align*}
&|||(\hat{v},\hat{w})|||_C^2 \leq  2(\tau \sum_{n=2}^N (\|u^n\|_\omega^2+\alpha^2\|v^n\|_\omega^2) + \|h\nabla u^1\|_h^2+\|h\partial_{\tau}u^1\|_h^2\\
&+\tau \sum_{n=2}^N (\|z^n\|_h^2+\alpha^2 h^4 \|w^n\|_h^2) +\tau \sum_{n=2}^N (\|\tau \nabla \partial_\tau u^n\|_h^2+\alpha^2\|\tau \nabla \partial_\tau v^n\|_h^2)\\
&+ \|h\nabla \partial_\tau u^1\|_h^2+ \|h\nabla \partial_\tau u^N\|_h^2+\alpha^2\|h\nabla \partial_\tau v^N\|_h^2).
\end{align*}
Note that:
$$\tau \sum_{n=2}^N \|v^n\|_\omega^2 \leq C |||z|||_D^2,$$ 
$$\tau \sum_{n=2}^N h^4\|w^n\|_h^2 \leq C h^2 |||u|||_F^2,$$
$$\tau \sum_{n=2}^N \|\tau \nabla \partial_\tau v^n\|_h^2 \leq C |||z|||_D^2,$$
$$\|h\nabla \partial_\tau v^N\|_h^2 \leq C \|z^N\|_h^2.$$
\end{proof}

One can use Proposition ~\ref{estimate} to show that the system of linear equations \eqref{EL} has a unique solution. Indeed, denote by $N_h$ the dimension of $V_h$. The equations \eqref{EL} define a square linear system with $2N_h \times N$ unknowns. Setting $f^n=q^n=0$, the coercivity estimate in Proposition ~\ref{estimate} implies that the kernel of this linear system is trivial and therefore there exists a unique solution for all choices of $f^n,q^n$. Henceforth, we will let $(u_h,z_h)$ denote the unique solution to \eqref{EL} subject to the measured data $f^n,q^n$. Next section is concerned with proving the convergence of the discrete solution $u_h$ to the continuum solution $u$ of \eqref{pf}. The dual variable $z_h$ is shown to converge to zero.\\

\section{A Priori Error Estimates}
An important feature of the error estimates below is that they include
bounds of the perturbations from the discretization of the domain. To
obtain such bounds we first prove some preliminary results. 
\begin{lem}\label{lem:geom_trace_error}
For all $v_h \in V_h$ there holds
\[
\|v_h\|_{\partial \Omega} \lesssim h \|\nabla v_h\|_{\Omega_h \setminus \Omega}.
\]
\end{lem}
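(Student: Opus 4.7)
The plan is to exploit two key geometric facts: $v_h$ vanishes on $\partial \Omega_h$ (since $V_h \subset H^1_0(\Omega_h)$), and by assumption \eqref{ext} the polyhedral boundary $\partial \Omega_h$ lies in an $O(h^2)$-tube around the smooth boundary $\partial \Omega$. Together these yield a scaled trace estimate in a thin strip with vanishing data on the outer side, and the scaling factor of $h^2$ in the width is precisely what produces the $h$ on the right-hand side.

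Concretely, I would first extend $v_h$ by zero outside $\Omega_h$; thanks to the homogeneous boundary condition, the extension lies in $H^1(\R^n)$ and hence has a well-defined trace on $\partial \Omega$. Split $\partial \Omega = \Gamma_\mathrm{in} \cup \Gamma_\mathrm{out}$ with $\Gamma_\mathrm{in} := \partial \Omega \cap \overline{\Omega_h}$; the extension vanishes identically on $\Gamma_\mathrm{out}$, so only the $L^2$-norm over $\Gamma_\mathrm{in}$ needs to be controlled. For $h$ sufficiently small, introduce normal coordinates $(x',s)$ on a tubular neighborhood of $\partial \Omega$ (well-defined by smoothness of $\partial \Omega$), with $x' \in \partial \Omega$ and $s$ the signed distance in the outward normal direction. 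The assumption \eqref{ext} then ensures that for each $x' \in \Gamma_\mathrm{in}$ the outward normal ray first meets $\partial \Omega_h$ at some $s_0(x')$ with $s_0(x') \lesssim h^2$, and the segment $\{(x',s) : 0 \leq s \leq s_0(x')\}$ is contained in $\Omega_h \setminus \Omega$.

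Since $v_h(x',s_0(x'))=0$, the Fundamental Theorem of Calculus gives
$$v_h(x',0) = -\int_0^{s_0(x')} \partial_s v_h(x',s)\,ds,$$
and Cauchy--Schwarz combined with $s_0(x') \lesssim h^2$ yields
$$|v_h(x',0)|^2 \lesssim h^2 \int_0^{s_0(x')} |\nabla v_h(x',s)|^2\,ds.$$
Integrating over $\Gamma_\mathrm{in}$ and changing variables from tubular to Cartesian coordinates (the Jacobian is uniformly bounded on the tubular neighborhood) converts the iterated integral into $h^2 \|\nabla v_h\|_{L^2(\Omega_h \setminus \Omega)}^2$, which is the asserted bound. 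The main obstacle is the geometric bookkeeping: one must verify that the strip $\Omega_h \setminus \Omega$ is genuinely foliated by such normal segments, which requires the reach of $\partial \Omega$ (guaranteed by smoothness) to exceed $O(h^2)$ and the quasi-uniform triangulation to interpolate $\partial \Omega$ consistently within that tube. All other steps are routine scalings.
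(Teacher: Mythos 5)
Your proposal is correct and follows essentially the same route as the paper: reduce to $\partial\Omega\cap\Omega_h$ via the zero extension, write $v_h$ at a boundary point as a line integral of $\nabla v_h$ along the normal segment to $\partial\Omega_h$ (where $v_h$ vanishes), apply Cauchy--Schwarz using the $O(h^2)$ segment length from \eqref{ext}, and integrate over $\partial\Omega$ to absorb the segments into $\|\nabla v_h\|_{\Omega_h\setminus\Omega}$. Your extra care with the tubular-coordinate Jacobian and foliation is just a more explicit version of the change-of-variables step the paper leaves implicit.
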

\begin{proof}
First, note that using the trivial extension $v_h \vert_{\Omega\setminus\Omega_h}=0$ there holds $\|v_h\|_{\partial \Omega}=\|v_h\|_{\partial \Omega \cap \Omega_h}$. Now, for $x \in \partial \Omega \cap \Omega_h$, we write $v_h(x) = \int_{p(x)}^x \nabla
v_h \cdot
n\,ds$, with $n$ the outward pointing unit normal of $\partial
\Omega$, and where $p(x) := x + \zeta(x) n(x)$, with $\zeta$ is the (signed) distance from $\partial
\Omega$ to $\partial \Omega_h$ in the $n$ direction.  By the
assumption \eqref{ext}, $|\zeta| \lesssim h^2$ and there holds
\begin{equation}\label{eq:CS_bound}
\int_{p(x)}^x \nabla v_h \cdot
n\,ds \leq |\zeta(x)|^{\frac12} \left(\int_{p(x)}^x |\nabla v_h \cdot
n|^2\,ds\right)^{\frac12} \lesssim h \left(\int_{p(x)}^x |\nabla v_h \cdot
n|^2\,ds\right)^{\frac12}.
\end{equation}
Using the above expression for $v_h\vert_{\partial \Omega}$ we have
that
\[
\|v_h\|_{\partial \Omega}^2 \lesssim h^2 \int_{\partial \Omega} \int_{p(x)}^x |\nabla v_h \cdot
n|^2\,ds\,dx \lesssim h^2 \|\nabla v_h\|_{\Omega_h \setminus \Omega}^2.
\]
\end{proof}

First we define an $H^1$-projection $\pi_h : H^1_0(\Omega) \to
V_h(\Omega_h)$. Given $u\in H^1_0(\Omega)$, we let $\pi_h u \in V_h$ to be the unique solution of
\begin{equation}\label{eq:H1proj}
a_h(\pi_h u,v_h) = a_h(E u,v_h),\quad \forall v_h \in V_h
\end{equation}
\begin{lem}\label{lem:approx_H1}
Let $u \in H^1_0(\Omega)$ and let $\pi_h u \in V_h$ be
defined by \eqref{eq:H1proj}. Then:
\begin{equation}\label{eq:L2ritzbound}
\|u - \pi_h u\|_\Omega \lesssim h |u|_{H^{1}(\Omega)}
\end{equation}
and moreover
\begin{equation}\label{eq:H1ritzbound}
\|E u - \pi_h u\|_{H^{1}(\Omega_h )} \lesssim h |u|_{H^{2}(\Omega)} \mbox{ for } u \in H^1_0(\Omega) \cap H^2(\Omega).
\end{equation}
\end{lem}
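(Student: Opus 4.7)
The strategy is: (i) extract Céa-type quasi-optimality from Galerkin orthogonality of the Ritz projection; (ii) prove \eqref{eq:H1ritzbound} by constructing an explicit $V_h$-approximant of $Eu$; (iii) deduce \eqref{eq:L2ritzbound} by an Aubin-Nitsche duality argument on the smooth reference domain $\Omega$. The recurring obstacle is the boundary mismatch between $\partial\Omega$ and $\partial\Omega_h$, which has to be tracked through every interpolation and integration by parts.

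First, the definition \eqref{eq:H1proj} immediately yields the orthogonality $a_h(Eu - \pi_h u, v_h)=0$ for all $v_h \in V_h$, and hence the quasi-optimality
\[
\|\nabla(Eu - \pi_h u)\|_{\Omega_h} \leq \inf_{v_h\in V_h}\|\nabla(Eu - v_h)\|_{\Omega_h}.
\]
Taking $v_h=0$ and invoking stability of the Stein extension gives the crude bound $\|\nabla(Eu - \pi_h u)\|_{\Omega_h}\lesssim |u|_{H^1(\Omega)}$, which feeds the duality step. For \eqref{eq:H1ritzbound} with $u \in H^1_0(\Omega)\cap H^2(\Omega)$, I would choose $v_h$ to be a Lagrange or Cl\'ement interpolant $I_h Eu$ with its nodal values on $\partial\Omega_h$ reset to zero so that $v_h\in V_h$. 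The bulk interpolation estimate gives $\|\nabla(Eu - I_h Eu)\|_{\Omega_h}\lesssim h|Eu|_{H^2(\Omega_h)}\lesssim h|u|_{H^2(\Omega)}$ by stability of $E$. The boundary correction contributes only $O(h|u|_{H^2})$ because, at each boundary node $x\in\partial\Omega_h$, the conditions \eqref{ext} and $u|_{\partial\Omega}=0$, together with a Taylor expansion from the nearest point of $\partial\Omega$, yield $|Eu(x)|\lesssim h^2\|Eu\|_{W^{1,\infty}(\Omega_h)}$, and an inverse inequality on the boundary-touching element layer converts this pointwise bound into the desired $H^1$-contribution.

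For \eqref{eq:L2ritzbound} I would use Aubin-Nitsche on $\Omega$. Since $u=Eu$ on $\Omega$, it suffices to estimate $\|Eu-\pi_h u\|_\Omega$. Let $\psi\in H^1_0(\Omega)$ solve $-\Delta\psi=(Eu-\pi_h u)|_\Omega$ in $\Omega$; elliptic regularity on the smooth domain gives $\|\psi\|_{H^2(\Omega)}\lesssim\|Eu-\pi_h u\|_\Omega$. Integrating by parts and using $Eu|_{\partial\Omega}=0$,
\[
\|Eu-\pi_h u\|_\Omega^2 = \int_\Omega \nabla(Eu-\pi_h u)\cdot\nabla\psi\,dx - \int_{\partial\Omega}\pi_h u\,(\nabla\psi\cdot n)\,ds.
\]
The volume term is transferred to $\Omega_h$ through a stable extension $E\psi\in H^2(\Omega\cup\Omega_h)$, the domain-mismatch being controlled by Lemma \ref{extension}; Galerkin orthogonality against a boundary-corrected interpolant of $E\psi$ (constructed as above) then bounds it by $\lesssim h\|\nabla(Eu-\pi_h u)\|_{\Omega_h}\|\psi\|_{H^2(\Omega)}\lesssim h|u|_{H^1(\Omega)}\|Eu-\pi_h u\|_\Omega$. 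The surface term is controlled using Lemma \ref{lem:geom_trace_error} applied to $\pi_h u$ and the trace bound $\|\nabla\psi\cdot n\|_{\partial\Omega}\lesssim\|\psi\|_{H^2(\Omega)}$, contributing at the same order. Dividing by $\|Eu-\pi_h u\|_\Omega$ closes the estimate. The hard part throughout is precisely this boundary bookkeeping: without \eqref{ext}, Lemma \ref{lem:geom_trace_error}, and Lemma \ref{extension} the argument would lose powers of $h$ at each boundary encounter and the optimal rates would be destroyed.
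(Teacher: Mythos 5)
Your proposal follows essentially the same route as the paper: Galerkin orthogonality of $\pi_h$ plus a (boundary-adjusted) nodal interpolant of $Eu$ for the $H^1$-bound \eqref{eq:H1ritzbound}, and an Aubin--Nitsche duality argument on the smooth domain $\Omega$ for \eqref{eq:L2ritzbound}, with the volume mismatch $\Omega\triangle\Omega_h$ handled by Lemma \ref{extension}, the surface term $(\pi_h u,\nabla\psi\cdot n)_{\partial\Omega}$ by Lemma \ref{lem:geom_trace_error}, and the crude stability bound $\|\nabla(Eu-\pi_h u)\|_{\Omega_h}\lesssim \|\nabla u\|_{\Omega}$ feeding the duality step exactly as in the paper.

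One detail in your argument does not hold as written. To push the boundary nodal values of the interpolant to zero you invoke $|Eu(x)|\lesssim h^2\|Eu\|_{W^{1,\infty}(\Omega_h)}$ at nodes $x\in\partial\Omega_h$; but for $n=2,3$ one has $H^2(\Omega)\not\hookrightarrow W^{1,\infty}(\Omega)$, so $\|Eu\|_{W^{1,\infty}(\Omega_h)}$ is not controlled by $|u|_{H^2(\Omega)}$ and that step cannot deliver the stated rate under the stated regularity. The fix is to replace the pointwise Taylor bound by the $L^2$-based fundamental-theorem-of-calculus argument already used in Lemma \ref{lem:geom_trace_error}: writing $Eu(x)=\int_{p(x)}^{x}\nabla Eu\cdot n\,ds$ with $|x-p(x)|\lesssim h^2$, applying Cauchy--Schwarz, and integrating over $\partial\Omega_h$ gives $\|Eu\|_{L^2(\partial\Omega_h)}\lesssim h\,\|\nabla Eu\|_{S_h}$ on the boundary strip $S_h$, which by Lemma \ref{extension} (applied to $\nabla Eu$) is $\lesssim h^2\|u\|_{H^2(\Omega)}$; a scaling/inverse estimate on the boundary layer of elements then shows the boundary correction contributes $O(h)$ (in fact better) in $H^1(\Omega_h)$. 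Alternatively one can simply quote \cite{BK94} for the interpolation estimate with zero boundary values on approximating polyhedral domains, which is what the paper implicitly does when it asserts $\|i_hu-Eu\|_{H^1(\Omega_h)}\lesssim h\|u\|_{H^2(\Omega)}$ without discussing the boundary nodes. Note also that the same correction is needed again for the interpolant of $E\psi$ in your duality step, where $\psi\in H^2(\Omega)\cap H^1_0(\Omega)$, so the repaired argument should be cited there too. The sign of your boundary term differs from the correct integration by parts, but this is immaterial since the term is estimated in absolute value.
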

\begin{proof}
First consider \eqref{eq:H1ritzbound}. Let $i_h u \in V_h$ denote the nodal interpolant of $E u$. 
By the
Poincar\'e's inequality there holds
\[
\|i_h u - \pi_h u\|_{H^1(\Omega_h)}^2 \lesssim a_h(i_h u - \pi_h u,i_h u - \pi_h u).
\]
Using the definition of $\pi_h u$, equation \eqref{eq:H1proj}, we have
\[
a_h(i_h u - \pi_h u,i_h u - \pi_h u) = a_h(i_h u -  E u,i_h u - \pi_h u)
\leq   \|i_h u -  E u\|_{H^1(\Omega_h)}\|i_h u - \pi_h
u\|_{H^1(\Omega_h)}. 
\]
Combining the above estimate with 
\[
\|i_hu-E u\|_{H^1(\Omega_h)} \lesssim h \|u\|_{H^2(\Omega_h)} \lesssim h \|u\|_{H^2(\Omega)}
\]
Dividing with $\|i_h u - \pi_h u\|_{H^1(\Omega_h)}$ and using this
estimate, it follows that
\[
\|i_h u - \pi_h u\|_{H^1(\Omega_h)} \lesssim   h |u|_{H^2(\Omega)}.
\]
The inequality \eqref{eq:H1ritzbound} follows by the triangle
inequality. For \eqref{eq:L2ritzbound}, first extend $\pi_h u$ to $\Omega$ by defining
$\pi_h u = 0$ in $\Omega \setminus \Omega_h$. Then define the dual problem
\[
\begin{array}{rcl}
-\Delta z &=& u - \pi_h u \quad \mbox{ in } \Omega\\
z&=&0 \quad \mbox{ on } \partial \Omega.
\end{array}
\]
By the smoothness of $\Omega$ we know that $|z|_{H^2(\Omega)} \lesssim
     \|u - \pi_h u\|_{\Omega}$.
It follows that
\[
\|u - \pi_h u\|_{\Omega}^2 = (u - \pi_h u,-\Delta z)_\Omega = (\nabla
     (u - \pi_h u), \nabla z)_\Omega+(\pi_h u, \nabla z \cdot
     n)_{\partial \Omega \cap \Omega_h}.
\]
For the first term in the right hand side we have
\[
(\nabla
     (u - \pi_h u), \nabla z)_\Omega = (\nabla
     (u - \pi_h u), \nabla E z)_{\Omega_h}- (\nabla
     (u - \pi_h u), \nabla E z)_{\Omega_h\setminus \Omega} +  (\nabla
     (u - \pi_h u), \nabla z)_{\Omega\setminus \Omega_h}.
\]
Therefore, recalling that by trivial extension, $\pi_h u\vert_{\Omega\setminus\Omega_h} =
0$,
\begin{multline}
\|u - \pi_h u\|_{\Omega}^2 = (u - \pi_h u,-\Delta z)_\Omega = (\nabla
     (u - \pi_h u), \nabla (E z - i_hE z))_{\Omega_h} \\
- (\nabla
     (u - \pi_h u), \nabla E z)_{\Omega_h\setminus \Omega} +  (\nabla
     (u - \pi_h u), \nabla z)_{\Omega\setminus \Omega_h}+(\pi_h u, \nabla z \cdot
     n)_{\partial \Omega \cap \Omega_h}\\
=  (\nabla
     (u - \pi_h u), \nabla (z - i_hz))_{\Omega_h}- (\nabla
     (u - \pi_h u), \nabla E z)_{\Omega_h\setminus \Omega}+ (\nabla u,\nabla
     z)_{\Omega\setminus\Omega_h}+(\pi_h u, \nabla z \cdot
     n)_{\partial \Omega \cap \Omega_h}\\
=I + II + III+IV.
\end{multline}
For the first term of the right hand side we have
\[
I \lesssim \|\nabla
     (u - \pi_h u)\|_{\Omega_h}  h |z|_{H^2(\Omega)}  \lesssim \|\nabla
     (u - \pi_h u)\|_{\Omega_h} h \|u - \pi_h u\|_\Omega \lesssim
     \|\nabla u\|_\Omega h  \|u - \pi_h u\|_\Omega,
\]
where we used that by \eqref{eq:H1proj} and the stability of the
extension operator there holds
\begin{equation}\label{eq:H1stabritz}
\|\nabla \pi_h u\|_{\Omega_h} \leq \|\nabla
     E u\|_{\Omega_h} \leq \|\nabla
     u\|_{\Omega}.
\end{equation}
To bound the second term we recall that by Lemma \ref{extension}, a trace
inequality and the stability of the extension and of $z$ there holds
\[
\|\nabla E z\|_{\Omega_h\setminus \Omega} \lesssim h \|u - u_h\|_\Omega.
\]
Hence, using once again the stability \eqref{eq:H1stabritz}
\[
II \leq \|\nabla
     (u - \pi_h u)\|_{\Omega_h\setminus \Omega} \|\nabla E
     z\|_{\Omega_h\setminus \Omega} \lesssim \|\nabla u\|_\Omega h  \|u - \pi_h u\|_\Omega.
\]
Similarly we obtain for the third term
\[
III \leq \|\nabla
     u\|_{\Omega\setminus \Omega_h} \|\nabla 
     z\|_{\Omega\setminus \Omega_h} \lesssim \|\nabla u\|_\Omega h  \|u - \pi_h u\|_\Omega.
\]
To estimate the fourth term, we use the Cauchy-Schwarz inequality,
Lemma \ref{lem:geom_trace_error}
and the trace inequality, followed by the stability estimate on $z$,
\begin{multline*}
IV = (\pi_h u,\nabla z\cdot n)_{\partial \Omega\cap \Omega_h} \leq
\|\pi_h u\|_{\partial \Omega\cap \Omega_h} \|\nabla z\|_{\partial \Omega\cap \Omega_h} 
\lesssim h 
\|\nabla \pi_h u\|_{\Omega_h \setminus \Omega} \|z\|_{H^2(\Omega)}
\lesssim h \|\nabla u\|_{\Omega} \|u
     - \pi_h u\|_{\Omega}.
\end{multline*}
Collecting the bounds for terms $I$-$IV$ we conclude.
\end{proof}

\begin{prop}
\label{apriori error}
Suppose $\Omega_h,\Omega$ are as before and that $u\in H^3(\mathcal{M})$. Let $(u_h,z_h)$ be the unique solution to the Euler-Lagrange equations \eqref{EL} with
$f= \Box u$ and $q = u|_{(0,T)\times \omega_h}$. Then:
\begin{equation}
\label{apriori error1}
|||u_h -\pi_h u |||_R + h|||u_h -\pi_h u|||_F +|||z_h|||_D \lesssim h  \|u\|_{H^3(\M)},
\end{equation}
where $\pi_h u$ is the orthogonal projection defined by equation \eqref{eq:H1proj}.
\end{prop}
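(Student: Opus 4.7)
The plan is to apply Proposition~\ref{estimate} to the error pair $(e, z_h) := (u_h - \pi_h u, z_h) \in V_h^{2N}$. This produces test functions $(v, w) \in V_h^{2N}$ with
\[
|||e|||_R^2 + h^2 |||e|||_F^2 + |||z_h|||_D^2 \lesssim A_1(e, w) + A_2((e, z_h), v),
\]
and $|||(v, w)|||_C \lesssim |||e|||_R + h |||e|||_F + |||z_h|||_D$. Inserting the Euler--Lagrange equations \eqref{EL} satisfied by $(u_h, z_h)$, the right-hand side becomes the pair of consistency residuals
\[
R_1(w) := \tau \sum_{n=2}^N (f^n, w^n)_h - A_1(\pi_h u, w), \qquad R_2(v) := \tau \sum_{n=1}^N (q^n, v^n)_{\omega_h} - A_2((\pi_h u, 0), v).
\]
It then suffices to prove $|R_1(w)| + |R_2(v)| \lesssim h\, \|u\|_{H^3(\M)}\, |||(v,w)|||_C$; absorbing $|||(v,w)|||_C$ against the square root of the left-hand side via Young's inequality and the second bound of Proposition~\ref{estimate} yields \eqref{apriori error1}.

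For $R_1(w)$, I would write $f^n = f^e(t_n)$ with $f = \Box u$ on $\Omega$ and integrate the Laplacian by parts on $\Omega_h$ against $w^n \in H^1_0(\Omega_h)$. The Ritz orthogonality \eqref{eq:H1proj} eliminates the $a_h(\pi_h u^n - E u(t_n), w^n)$ piece, leaving only the temporal consistency contribution $\tau \sum (\partial_t^2 u(t_n) - \partial_\tau^2 \pi_h u^n, w^n)_h$ together with geometric corrections on $(\Omega \setminus \Omega_h) \cup (\Omega_h \setminus \Omega)$ arising from the mismatch between $(f, \cdot)_\Omega$ and $(f^e, \cdot)_{\Omega_h}$. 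The temporal term is handled by a Taylor expansion combined with Lemma~\ref{lem:approx_H1}, giving $O(h) \|u\|_{H^3(\M)}$ against $\|w^n\|_h$; the geometric corrections are $O(h^2) \|u\|_{H^3(\M)}$ by Lemmas~\ref{extension} and~\ref{lem:geom_trace_error}, the stability of $E$, and $\tau = \mathcal{O}(h)$. Both pair against the $L^2$-in-time part of $|||z|||_D$ (applied with $z = w$) absorbed into $|||(v,w)|||_C$.

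For $R_2(v)$, the data-matching term $\tau \sum (q^n - \pi_h u^n, v^n)_{\omega_h}$ is controlled by the $L^2$-Ritz estimate \eqref{eq:L2ritzbound}, paired against $|||v|||_R$. The piece $G(v, 0)$ vanishes, and each stabilization quantity evaluated on $\pi_h u$, namely $\|h \nabla \pi_h u^1\|_h$, $\|h \partial_\tau \pi_h u^1\|_h$, $\|h \nabla \partial_\tau \pi_h u^1\|_h$, $\|h \nabla \partial_\tau \pi_h u^N\|_h$, and $\tau \sum \|\tau \nabla \partial_\tau \pi_h u^n\|_h^2$, carries an explicit factor of $h$ (or $\tau = \mathcal{O}(h)$) and is $O(h) \|u\|_{H^3(\M)}$ by the $H^1$-stability \eqref{eq:H1stabritz} of the Ritz projection together with Taylor bounds for $\partial_\tau$ on smooth $u$. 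The main obstacle is the sharpness of the temporal consistency estimate $\partial_\tau^2 \pi_h u^n - \partial_t^2 u(t_n)$: the regularity $u \in H^3(\M)$ supplies $\partial_t^3 u \in L^2(\M)$ only in an averaged sense, which is just enough, combined with $\tau = \mathcal{O}(h)$, to yield the target $O(h)$ rate after pairing against the $L^2$-in-time norm embedded in $|||(v,w)|||_C$ and the subsequent absorption step.
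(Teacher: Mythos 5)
Your proposal is correct and follows essentially the same route as the paper's own proof: coercivity of Proposition~\ref{estimate} applied to $(u_h-\pi_h u,z_h)$, rewriting the right-hand side via the Euler--Lagrange equations \eqref{EL} as consistency residuals, splitting the $A_1$ residual into temporal consistency, Ritz projection error and the geometric residual on $\Omega_h\setminus\Omega$ (Lemmas~\ref{extension}, \ref{lem:geom_trace_error}), bounding the data term and the stabilization terms evaluated at $\pi_h u$ in the $A_2$ residual by $O(h)\,\|u\|_{H^3(\M)}$, and absorbing $|||(v,w)|||_C$ via the second estimate of Proposition~\ref{estimate}. No genuine gap beyond routine details (e.g.\ the inverse/discrete Poincar\'e step converting $h^2\|\nabla w^n\|_h$ into quantities controlled by the $C$-norm), which the paper handles the same way.
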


\begin{proof}
First we recall that by the stability estimate of Proposition ~\ref{estimate}, there is $(v,w) \in V_h^{2N}$ satisfying:
\begin{equation}\label{eq:stab1}
|||u_h -\pi_h u|||_R^2 +h^2|||u_h -\pi_h u|||_F^2 + |||z_h|||_D^2
\lesssim ( A_1(u_h -\pi_h u,w) + A_2((u_h -\pi_h u,z_h),v))
\end{equation}
and
\begin{equation}\label{eq:stab2}
|||(v,w)|||_C \lesssim |||u_h -\pi_h u|||_R + h |||u_h -\pi_h u|||_F
+ |||z_h|||_D.
\end{equation}

\noindent We will now bound the two terms of the right hand side of \eqref{eq:stab1}. Note that if $u^n= u(t_n)$ then:
$$ (\partial^2_t u^n, \psi)_\Omega + a(u^n,\psi) = (f^n ,\psi)_\Omega \hspace{5mm}
\forall \psi \in H^1_0(\Omega).$$
Observe that given any $\phi \in H^1_0(\Omega_h)$,
\begin{equation*}
(Ef^n ,\phi)_h = (Ef^n - \Box E u^n,\phi)_h+(\Box E u^n,\phi)_h 
= (Ef^n - \Box E u^n,\phi)_{\Omega_h \setminus \Omega}+(\Box E u^n,\phi)_h.
\end{equation*}
This implies by the left equation of \eqref{EL}, that for all $w \in V_h$
\[
A_1(u_h,w) =\tau \sum_{n=2}^N\left( (\varsigma^n_E,w)_{\Omega_h \setminus
  \Omega}+(\partial^2_t u^n,w)_h+a_h(u^n,w)\right),
\]
where, with some abuse of notation we identify $u^n$ with $E u^n$
outside $\Omega$ and $\varsigma^n_E:=Ef^n - \Box E u^n$ denotes the geometry residual term.
Together with equation \eqref{EL} and \eqref{eq:H1proj}, this implies that:
$$A_1(u_h-\pi_h u,w) = \tau \sum_2^N (\partial^2_t u^n
- \partial^2_\tau u^n , w^n)_h + \tau \sum_2^N
((1-\pi_h)\partial^2_{\tau}u^n,w^n)_h +\tau \sum_2^N (\varsigma^n_E,w^n)_{\Omega_h \setminus
  \Omega}.$$
First we observe that by Lemma  \ref{extension},
\[
(\varsigma^n_E,w^n)_{\Omega_h \setminus
  \Omega} \leq \|\varsigma^n_E\|_{\Omega_h \setminus
  \Omega} \|w^n\|_{\Omega_h \setminus
  \Omega} \lesssim (\|f^n\|_\Omega+ \|\partial^2_t u^n\|_\Omega+\|u^n\|_{H^2(\Omega)}) h^2 \|\nabla w^n\|_{h}.
\]
Let:
$$I_1 = \tau \sum_2^N \|(1-\pi_h)\partial^2_{\tau}u^n\|_h^2,$$
$$ I_2 = \tau \sum_2^N \|\partial^2_t u^n - \partial^2_\tau
u^n\|_h^2,$$
$$ I_3 = \tau \sum_2^N  h^2 (\|f^n\|^2_\Omega+ \|\partial^2_t u^n\|^2_\Omega+\|u^n\|^2_{H^2(\Omega)}) \lesssim h^2(\|f\|_{H^1(0,T;L^2(\Omega))}+\|\partial^2_t u\|^2_{H^1(0,T;L^2(\Omega))}+\|u\|^2_{H^1(0,T;H^2(\Omega))}).$$
Then clearly we have:
$$ A_1(u_h-\pi_h u,w) \lesssim (I_1+ I_2+ I_3)^{\frac{1}{2}}
|||(0,w)|||_C.$$
Here we used that since $\tau = O(h)$ and by a (discrete) Poincar\'e inequality 
$$
\tau \sum_2^N h^2 \|\nabla w^n\|^2_{h} \lesssim
\|h \nabla w^1\|_h^2 + \tau \sum_{n=2}^N \|\tau \nabla \partial_{\tau}
w^n\|_h^2 \leq |||(0,w)|||_C^2.
$$
It remains to bound $I_1$ and $I_2$. To this end, observe that:
$$\partial^2_{\tau} u^n = \frac{1}{\tau^2} (  \int_{t_{n-2}}^{t_n} (t-t_{n-2})\partial^2_{t} u \,dt - 2 \int_{t_{n-1}}^{t_n} (t-t_{n-1})\partial^2_{t}u\, dt).$$
Hence:
$$I_1 \leq \frac{1}{\tau^2} \sum_2^N ( 2 \int_{t_{n-2}}^{t_n}(t-t_{n-2})^2\|(\pi_h-1)\partial^2_{t}u\|_h^2 \,dt +8 \int_{t_{n-1}}^{t_n} (t-t_{n-1})^2\|(\pi_h-1)\partial^2_{t}u\|_h^2 \,dt)$$
$$ \leq  \sum_2^N ( 2 \int_{t_{n-2}}^{t_n}\|(\pi_h-1)\partial^2_{t}u\|_h^2 \,dt +8 \int_{t_{n-1}}^{t_n} \|(\pi_h-1)\partial^2_{t}u\|_h^2 \,dt)$$
$$ \lesssim h^2 \int_0^T \|\nabla \partial^2_{t}u\|_h^2 \,dt.$$
\noindent Similarly we have:
$$ \partial^2_{\tau}u^n - \partial^2_{t}u^n = \frac{1}{2 \tau^2}( - \int_{t_{n-2}}^{t_n} (t-t_{n-2})^2\partial^3_{t} u \,dt +2 \int_{t_{n-1}}^{t_n} (t-t_{n-1})^2\partial^3_{t}u\,dt).$$
\noindent Using this identity we obtain:
$$I_2 \leq \frac{1}{2\tau^3}( \sum_2^N  (\int_{t_{n-2}}^{t_n} (t-t_{n-2})^4 \,dt) (\int_{t_{n-2}}^{t_n}\|\partial^3_{t} u\|_h^2\,dt)  +4 \sum_2^N (\int_{t_{n-1}}^{t_n} (t-t_{n-1})^4\,dt)(\int_{t_{n-1}}^{t_n}\|\partial^3_{t}u\|_h^2\,dt))$$
$$ \leq C \tau^2 \int_0^T \|\partial^3_t u\|_h^2 dt.$$
Considering now the contribution from $A_2$, note that by definition
$$ A_2((u_h-\pi_h u,z_h),v) = \tau \sum_2^N (u^n - \pi_h^n u^n, v^n)_\omega - (h \partial_{\tau}\pi_hu^1,h\partial_\tau v^1)-(h\nabla\pi_hu^1,h\nabla v^1)$$ 
$$- \tau \sum_2^N(\tau \nabla \partial_\tau \pi_h u^n , \tau \nabla \partial_\tau v^n)- (h \partial_\tau \nabla \pi_h u^N, h \partial_\tau \nabla u^N) )- (h \partial_\tau \nabla \pi_h u^1, h \partial_\tau \nabla u^1) .$$
Hence:
$$A_2((u_h-\pi_hu,z_h),v) \leq C ( I_3+I_4+I_5+I_6+I_7+I_8)^{\frac{1}{2}} |||(v,0)|||_C,$$
where:
\begin{align*}
I_3 &= \tau  \sum_2^N \|u^n - \pi_h^n u^n\|_{\omega_h}^2 \lesssim h^2 \|\nabla u \|_{L^2(0,T;H^1(\Omega_h))}^2,\\
I_4&= \|h \partial_{\tau}\pi_hu^1\|_h^2 \lesssim h^2\|u \|_{H^2(0,T;L^2(\Omega_h))}^2,\\
I_5&=\|h\nabla\pi_hu^1\|_h^2\lesssim h^2\|\nabla u \|_{H^1(0,T;L^2(\Omega_h))}^2,\\
I_6 &= \tau \sum_2^N\|\tau \nabla \partial_\tau \pi_h u^n\|_h^2 \lesssim \tau^2 \int_0^T \|\nabla \partial_t u \|_h^2 \,dt,\\
I_7&= \|h \nabla \partial_\tau \pi_h u^N\|_h^2 \lesssim h^2\|\nabla u \|_{H^2(0,T;L^2(\Omega_h))}^2,\\
I_8&= \|h \nabla \partial_\tau \pi_h u^1\|_h^2 \lesssim h^2\|\nabla u \|_{H^2(0,T;L^2(\Omega_h))}^2.
\end{align*}
By the stability of the extension, all the norms over $\Omega_h$ can
now be bounded by norms of the same quantities over $\Omega$.
The claim follows by collecting the above bounds.
\end{proof}

\begin{cor}\label{cor:H1_apriori}
Under the same assumptions as in Proposition \ref{apriori error} there holds
\[
|||u_h - u |||_R + h|||u_h - u|||_F +|||z_h|||_D \lesssim h  \|u\|_{H^3(\M)},
\]
and
\[
\|\nabla u_h^0\| +|||u|||_R+ |||u_h|||_F \leq C \|u\|_{H^3(\M)}.
\]
\end{cor}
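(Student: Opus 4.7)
The plan is to upgrade Proposition~\ref{apriori error} (whose conclusion is phrased in terms of the projected difference $u_h-\pi_h u$) to one phrased in terms of $u_h - u$ via the triangle inequality, and then to derive the second estimate by combining the first with direct bounds on $|||u|||_R$ and $|||u|||_F$. Writing $u_h-u=(u_h-\pi_h u)+(\pi_h u - u)$, Proposition~\ref{apriori error} already controls the first summand, so the whole task reduces to proving
\[
|||\pi_h u - u|||_R + h\,|||\pi_h u - u|||_F \lesssim h\|u\|_{H^3(\M)}.
\]

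The key observation for this bound is that $\pi_h$ acts pointwise in time and therefore commutes with the backward differences $\partial_\tau$ and $\partial_\tau^2$, so every contribution to either triple-bar norm can be rewritten in the form $\|(1-\pi_h)\partial_\tau^k u^n\|_h$ or $\|\nabla(1-\pi_h)\partial_\tau^k u^n\|_h$. Lemma~\ref{lem:approx_H1} then supplies the approximation bounds $\|(1-\pi_h)v\|_{\Omega_h}\lesssim h\|v\|_{H^1(\Omega)}$ and $\|\nabla(1-\pi_h)v\|_{\Omega_h}\lesssim h\|v\|_{H^2(\Omega)}$. Representing the discrete time derivatives as time integrals of $\partial_t u$ and $\partial_t^2 u$ (exactly as in the proof of Proposition~\ref{apriori error}), invoking the Sobolev embedding $H^3(\M)\hookrightarrow C([0,T];H^2(\Omega))\cap C^1([0,T];H^1(\Omega))$, and using $\tau=O(h)$, one arrives at the required $O(h\|u\|_{H^3(\M)})$ bound for each summand. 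The explicitly $h$-weighted contributions to $|||\cdot|||_R$ actually gain an extra factor of $h$ in the process and are absorbed without difficulty.

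For the second inequality, the bound $|||u|||_R\lesssim\|u\|_{H^3(\M)}$ is immediate: the observation contribution $\tau\sum_n\|u^n\|_{\omega_h}^2$ is bounded by $\|u\|_{L^2(\M)}^2$, while every remaining term carries an explicit $h$ or $\tau$. The control on $|||u_h|||_F$ then follows from $|||u_h|||_F \le |||u_h-u|||_F + |||u|||_F$, the first summand being $O(\|u\|_{H^3(\M)})$ by the first half of the corollary (since $h|||u_h-u|||_F\lesssim h\|u\|_{H^3(\M)}$) and the second being verified term by term using the same embeddings.

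The only genuinely delicate step, and the one I expect to be the main obstacle, is the bound on $\|\nabla u_h^0\|$, because the $R$-norm controls the spatial gradient of $u_h$ only at the index $n=1$ and only with an $h$-weight. The plan is to split
\[
\|\nabla u_h^0\|_h \le \|\nabla\pi_h u^0\|_h + \|\nabla(u_h^0-\pi_h u^0)\|_h,
\]
dominate the first summand by the $H^1$-stability of $\pi_h$ together with Sobolev embedding in time, and handle the second via the one-step telescoping identity $u_h^0 - \pi_h u^0 = (u_h^1-\pi_h u^1) - \tau\,\partial_\tau(u_h-\pi_h u)^1$. Applying $\nabla$ and observing that both $\|h\nabla(u_h^1-\pi_h u^1)\|_h$ and $\|h\nabla\partial_\tau(u_h-\pi_h u)^1\|_h$ are dominated by $|||u_h-\pi_h u|||_R\lesssim h\|u\|_{H^3(\M)}$, one finds, after dividing by $h$ and invoking $\tau=O(h)$, the unweighted bound $\|\nabla u_h^0\|_h\lesssim\|u\|_{H^3(\M)}$. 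Turning an $h$-weighted $R$-norm estimate at index $1$ into an unweighted gradient bound at index $0$ via this telescoping trick is the technical heart of the argument.
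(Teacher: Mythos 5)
Your proposal is correct and follows essentially the same route as the paper: the first bound via the triangle inequality with $\pi_h u$, Proposition \ref{apriori error}, Lemma \ref{lem:approx_H1} and Taylor expansions in time, and the bound on $\|\nabla u_h^0\|$ via the one-step identity relating the value at index $0$ to the value and backward difference at index $1$, exploiting the $h$-weighted gradient terms contained in the $R$-norm. The only immaterial difference is that the paper carries out this last step by adding and subtracting $u$ and invoking the first inequality of the corollary, whereas you add and subtract $\pi_h u$ and invoke Proposition \ref{apriori error} directly (together with the $H^1$-stability of $\pi_h$); both reductions are equivalent.
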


\begin{proof}
The first inequality is immediate by adding and subtracting $\pi_h u$,
applying the triangle inequality followed by Proposition \ref{apriori error} and Lemma \ref{lem:approx_H1} and similar Taylor expansion
arguments as in Proposition \ref{apriori
  error}.
In the second inequality we note that
\[
\|\nabla u_h^1\|_h^2 + \|\nabla u_h^0\|_h^2 \lesssim \|\nabla
u_h^1\|_h^2+\tau^2 \|\nabla \partial_\tau u_h^1\|_h^2.
\]
We then add and subtract $u$ in the right hand side of the last
inequality and in $|||u_h|||_F$ and proceed as before, using the first
inequality of the result to control the $u-u_h$ part and a Taylor
expansion argument for the second.
\end{proof}

Before presenting the main theorem of this section we need an additional definition and lemma as follows. For each $w \in H^1_0(\mathcal{M})$, let us introduce the time averaged function $(\bar{w}^n)_{n=1}^N$ through
\[
\bar w^n = \tau^{-1} \int_{t^{n-1}}^{t^n} w \,dt,
\]
and denote by $\bar w$ the piecewise constant function 
$\bar w\vert_{[t^{n-1},t^n]} = \bar w^n$.


\begin{lem}
\label{bt}
Suppose $u \in H^3(\M)$ is the unique solution to the continuum problem \eqref{pf} and let $(u_h,z_h)$ denote the discrete solution to the Euler-Lagrange equations \eqref{EL}. The following estimate holds:

$$ |(\nabla u_h^1,\nabla \bar w^1)| \leq C \| u\|_{H^3(\M)} \|w\|_{H^1(\mathcal{M})},$$
where $w \in H^1_0(\mathcal{M})$ is arbitrary and  $C$ is a constant independent of the parameter $h$.
\end{lem}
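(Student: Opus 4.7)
The naive Cauchy–Schwarz estimate $|(\nabla u_h^1, \nabla \bar w^1)_h| \le \|\nabla u_h^1\|_h\,\|\nabla \bar w^1\|_h$ gives only $\tau^{-1/2}\|u\|_{H^3(\M)}\|w\|_{H^1(\M)}$, since $\|\nabla \bar w^1\|_h$ is sharply $O(\tau^{-1/2}\|w\|_{H^1(\M)})$ for $w$ concentrated in the first time slab. The missing factor of $\tau^{1/2}$ must be gained from $w(0,\cdot)=0$ (a consequence of $w\in H^1_0(\M)$): writing $\bar w^1 = \tau^{-1}\int_0^\tau(\tau-s)\p_s w(s)\,ds$ yields the crucial smallness $\|\bar w^1\|_{L^2(\Omega)} \le (\tau/3)^{1/2}\|\p_t w\|_{L^2(\M)} \le \tau^{1/2}\|w\|_{H^1(\M)}$. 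The strategy is to rewrite the $\nabla$–$\nabla$ pairing so that $\bar w^1$ itself (not its gradient) carries the weight.

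The plan is to split $u_h^1 = u_h^0 + \tau\p_\tau u_h^1$ and bound the two resulting pieces separately. For the time-increment piece $\tau(\nabla\p_\tau u_h^1, \nabla \bar w^1)_h$, the stabilization term $\|h\nabla \p_\tau u^1\|_h^2$ in $|||\cdot|||_R$, combined with Proposition~\ref{apriori error} and the estimate $\|u(\tau)-u(0)\|_{H^1(\Omega)}\lesssim\tau^{1/2}\|u\|_{H^3(\M)}$ applied to $\pi_h(u(\tau)-u(0))$ via Lemma~\ref{lem:approx_H1}, yields $\|\nabla\p_\tau u_h^1\|_h \lesssim \tau^{-1/2}\|u\|_{H^3}$, and hence $\tau\,|(\nabla\p_\tau u_h^1,\nabla\bar w^1)_h| \lesssim \tau\cdot\tau^{-1/2}\|u\|_{H^3}\cdot\tau^{-1/2}\|w\|_{H^1}=\|u\|_{H^3}\|w\|_{H^1}$.

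For the initial-slice piece $(\nabla u_h^0, \nabla \bar w^1)_h$, I would use $u_h^0\in V_h$ to replace $\bar w^1$ by its $H^1$-projection $\Pi_h \bar w^1\in V_h$, and then compare to the exact initial datum $u_0:=u(0,\cdot)\in H^2(\Omega)$. Extending $\bar w^1$ by zero to $\Omega_h\setminus\Omega$ (valid because $\bar w^1|_{\p\Omega}=0$) and writing $a_h(u_h^0,\Pi_h\bar w^1) = a_h(\pi_h u_0, \Pi_h \bar w^1) + a_h(u_h^0-\pi_h u_0, \Pi_h \bar w^1)$, the projected-data piece is, by definition of $\pi_h$, equal to $a_h(Eu_0, \Pi_h \bar w^1)$. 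Integration by parts in $\Omega_h$, with boundary terms vanishing since $\Pi_h \bar w^1 = 0$ on $\p\Omega_h$, converts this to $-(\Delta Eu_0,\Pi_h\bar w^1)_h$, which is bounded by $\|u_0\|_{H^2(\Omega)}\|\Pi_h\bar w^1\|_{L^2(\Omega_h)}$. Using the $L^2$-smallness $\|\bar w^1\|_{L^2(\Omega)}\lesssim\tau^{1/2}\|w\|_{H^1}$ together with the standard $H^1$-projection approximation $\|\bar w^1-\Pi_h\bar w^1\|_{L^2}\lesssim h\|\nabla\bar w^1\|_{L^2}$, one finds $\|\Pi_h \bar w^1\|_{L^2(\Omega_h)}\lesssim\sqrt h\,\|w\|_{H^1(\M)}$, and the projected-data piece is $\lesssim\sqrt h\,\|u\|_{H^3}\|w\|_{H^1}$.

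The main obstacle is the discrete-error piece $a_h(u_h^0-\pi_h u_0,\Pi_h\bar w^1)$, where Cauchy–Schwarz together with the bound $\|\nabla(u_h^0-\pi_h u_0)\|_h\lesssim\|u\|_{H^3}$ from Corollary~\ref{cor:H1_apriori} still produces the unfavourable $\tau^{-1/2}$ factor, and a direct element-wise integration by parts fails because the interior-face jumps of $\nabla(u_h^0-\pi_h u_0)$ are not small a priori. To close this gap, I would invoke the Euler–Lagrange identity obtained by varying $v^0$ in $A_2$, namely
\[
(z_h^2,\psi)_h = h^2\bigl[(\p_\tau u_h^1,\psi)_h + (\nabla\p_\tau u_h^1,\nabla\psi)_h\bigr]\quad\forall\psi\in V_h,
\]
together with the Galerkin orthogonality inherited from $A_2$ (obtained by subtracting the consistency identity $A_2((u_{ex},0),v) = \tau\sum(u_{ex}^n,v^n)_{\omega_h}+\text{stab}(u_{ex},v)$ from the Euler–Lagrange equation, leaving only stabilization residuals in $u_{ex}$). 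Careful combination of these identities with Proposition~\ref{apriori error}, Corollary~\ref{cor:H1_apriori}, and the $\sqrt h$-smallness of $\Pi_h \bar w^1$ is expected to deliver the bound $\lesssim \|u\|_{H^3}\|w\|_{H^1}$; the delicate step is the algebraic bookkeeping that must eliminate every residual factor of $\tau^{-1/2}$.
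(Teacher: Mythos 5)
Your reduction of the problem is sound as far as it goes: the observation that the missing $\tau^{1/2}$ must come from $w(0,\cdot)=0$ via $\|\bar w^1\|_{L^2(\Omega)}\lesssim \tau^{1/2}\|w\|_{H^1(\M)}$ is exactly the right mechanism, and your first piece $\tau(\nabla\partial_\tau u_h^1,\nabla\bar w^1)_h$ is correctly disposed of using the stabilizer $\|h\nabla\partial_\tau u^1\|_h$ together with Proposition~\ref{apriori error} and a Taylor argument for $\pi_h u$. However, the proof is not complete: the term you yourself identify as ``the main obstacle'', $a_h(u_h^0-\pi_h u_0,\Pi_h\bar w^1)$, is never actually bounded. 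The available a priori information gives only $\|\nabla(u_h^0-\pi_h u_0)\|_h\lesssim\|u\|_{H^3(\M)}$ (not smallness), so Cauchy--Schwarz loses $\tau^{-1/2}$, and the identities you propose to invoke do not repair this. The variation of $A_2$ with respect to $v^0$ yields $(z_h^2,\psi)_h=h^2[(\partial_\tau u_h^1,\psi)_h+(\nabla\partial_\tau u_h^1,\nabla\psi)_h]$, which contains no stiffness pairing with $u_h^0$ at all; the variation with respect to $v^1$ does produce $h^2(\nabla u_h^1,\nabla\psi)_h$, but after dividing by $h^2$ it is accompanied by $h^{-2}\tau^{-1}(2z_h^2-z_h^3,\psi)_h$, and the only bound at your disposal, $|||z_h|||_D\lesssim h\|u\|_{H^3(\M)}$, i.e.\ $\|z_h^n\|_h\lesssim \tau^{-1/2}h\|u\|_{H^3(\M)}$, is far too weak to control such a term (it blows up like a negative power of $h$ even after using $\|\Pi_h\bar w^1\|_h\lesssim h^{1/2}\|w\|_{H^1(\M)}$). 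Deferring this to ``careful algebraic bookkeeping'' is precisely where the proof is missing; there is also no consistency/Galerkin-orthogonality relation in $A_2$ that ties $u_h^0$ to $u_0$, since the initial data are the unknowns of the problem.

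The paper closes this gap by going \emph{forward} rather than backward in time: write $u_h^1=u_h^2-\tau\partial_\tau u_h^2$, bound the increment term as you did (using $\|\nabla\partial_\tau u_h^2\|_h\lesssim\tau^{-1/2}\|u\|_{H^3(\M)}$), and for $(\nabla u_h^2,\nabla\bar w^1)_h$ use the definition of $\pi_h$ to replace $\bar w^1$ by $\pi_h\bar w^1\in V_h$ and then the \emph{first} Euler--Lagrange equation $A_1(u_h,\cdot)=\tau\sum_{n\geq2}(f^n,\cdot)_h$, which holds at $n=2$, to trade the stiffness term for $(f^2,\pi_h\bar w^1)_h-(\partial_\tau^2 u_h^2,\pi_h\bar w^1)_h$; the mass pairings are then small because $\|\bar w^1\|_h\lesssim\tau^{1/2}\|w\|_{H^1(\M)}$, $\|(\pi_h-1)\bar w^1\|_h\lesssim h\|\nabla\bar w^1\|_h$, and $\|\partial_\tau^2 u_h^2\|_h\lesssim\tau^{-1/2}\|u\|_{H^3(\M)}$. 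The essential point your route misses is that the discrete wave equation is only available for time levels $n\geq2$, so the index must be shifted up to $2$, not down to $0$, before the $L^2$-smallness of $\bar w^1$ can be brought to bear.
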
 
\begin{proof}
\noindent Note that 
\bel{hh}
(\nabla u_h^1, \nabla \bar{w}^1) = -\tau (\nabla \partial_\tau u_h^2, \nabla \bar{w}^1)+(\nabla u_h^2, \nabla \bar{w}^1).
\ee
For the first term on the right hand side of equation \eqref{hh}, observe that:
$$|\tau (\nabla \partial_\tau u_h^2, \nabla \bar{w}^1)| \leq \tau \|\nabla \partial_\tau u_h^2\| \|\nabla \bar{w}^1\| \leq C \|u\|_{H^3(\M)} \|w\|_{H^1(\mathcal{M})}, $$
where we are using Corollary ~\ref{cor:H1_apriori} to bound
$\|\nabla \partial_\tau u_h^2\| \leq C \tau^{-\frac{1}{2}} \|u\|_{H^3(\M)}$
and the stability of $\bar w$ for the bound $\|\nabla \bar{w}^1\| \leq \tau^{-\frac{1}{2}} \|w\|_{H^1(\mathcal{M})}$. For the second term on the right hand side of equation \eqref{hh} we have
$$ (\nabla u_h^2,\nabla \bar{w}^1)=(\nabla u_h^2, \nabla \pi_h \bar{w}^1)=(f^2,\pi_h \bar{w}^1) - (\partial^2_\tau u_h^2, (\pi_h-1) \bar{w}^1)- (\partial^2_\tau u_h^2,  \bar{w}^1)=I +II + III.$$
We note that Theorem \ref{apriori error} implies that $\|\partial_\tau^2 u_h^2\| \leq C\tau^{-\frac{1}{2}}\|u\|_{H^3(\M)}$. Now
$$|I| \leq C \|f\|_{H^1((0,T):L^2(\Omega))}\|w\|_{H^1(\mathcal{M})} \leq C \|u\|_{H^3(\M)},$$
$$|II| \leq C \| \partial^2_\tau u_h^2\| \tau^{\frac{1}{2}}\|w\|_{H^1(\mathcal{M})}\leq C \|u\|_{H^3(\M)} \|w\|_{H^1(\mathcal{M})},$$
$$|III|  \leq C \| \partial^2_\tau u_h^2\| \|\bar{w}^1\| \leq C \|u\|_{H^3(\M)} \|w\|_{H^1(\mathcal{M})},$$

\noindent where in the last step we are using the standing assumption that $\tau \sim h$ and
\[
\|\bar w^1\| 
 \leq
\tau^{-\frac12} (\int_0^\tau \|w(t,\cdot)\|_h^2 \,dt)^{\frac12},
\]
\[
\int_0^\tau \int_\Omega |\int_0^t \partial_t w(t,\cdot) \, dt|^2\,dx
\,dt \leq \int_0^\tau \int_\Omega \tau \int_0^\tau |\partial_t w(t,\cdot)|^2 \, dt\,dx\,dt = \tau^2 \|\partial_t w\|^2_{L^2((0,t_1)\times\Omega)}.
\]

\end{proof}
\noindent We are now ready to state the main theorem as follows. 

\begin{thm}
\label{error estimate}
Suppose $\cO=(0,T)\times\omega$ satisfies the geometric control condition. Let $u \in H^3(\M)$ denote the unique solution to the continuum problem \eqref{pf} with $q \in L^2(\cO)$ and let $(u_h,z_h)$ denote the unique discrete solution to the Euler-Lagrange equations \eqref{EL}. Extend $u_h$ to all of $\Omega$ by setting it equal to zero in $\Omega \setminus \Omega_h$. The following error estimate holds:
$$\sup_{t\in[0,T]}(\|u(t,\cdot) - \tilde{u}_h(t,\cdot)\|_{L^2(\Omega)}+\|\p_t u(t,\cdot)-\p_t\tilde{u}_h(t,\cdot)\|_{H^{-1}(\Omega)}) \leq C h \|u\|_{H^3(\M)},$$
where $\tilde{u}_h \in C(0,T;L^2(\Omega))$ denotes the linear interpolation
$$ \tilde{u}_h = \frac{1}{\tau}( (t-t_{n-1}) u_h^n + (t_n-t)u_h^{n-1}) \quad \forall t \in [t_{n-1},t_n] .$$
\end{thm}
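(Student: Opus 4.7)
The overall strategy is to apply the continuum observability estimate of Theorem~\ref{continuum} to the error function $e := u - \tilde u_h$, after extending $\tilde u_h$ by zero to $\Omega \setminus \Omega_h$. Since $\tilde u_h$ is continuous in time with values in $L^2(\Omega)$, piecewise $C^\infty$ in time, and vanishes on $\partial \Omega_h$ at each time, one verifies directly that $e$ satisfies the regularity hypotheses of Theorem~\ref{continuum}, which reduces the goal to establishing
\begin{equation*}
\|e\|_{L^2(\cO)} \;+\; \|\Box e\|_{H^{-1}(\M)} \;+\; \|e\|_{L^2((0,T)\times\partial\Omega)} \;\lesssim\; h\|u\|_{H^3(\M)}.
\end{equation*}

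Two of the three terms admit relatively direct estimates. Because $u$ vanishes on $\partial\Omega$, the trace contribution reduces to $\|\tilde u_h\|_{L^2((0,T)\times\partial\Omega)}$; applying Lemma~\ref{lem:geom_trace_error} slicewise in time to $\tilde u_h(t,\cdot)\in V_h$ and integrating in $t$ yields $h\,|||u_h|||_F \lesssim h\|u\|_{H^3(\M)}$, using Corollary~\ref{cor:H1_apriori} for the last inequality. For $\|e\|_{L^2(\cO)}$, I would insert the piecewise-constant-in-time interpolant $\hat u_h$ of $(u_h^n)_{n=1}^N$, apply the triangle inequality, and bound the $\|u-\hat u_h\|_{L^2(\cO)}$ part using $|||u-u_h|||_R \lesssim h\|u\|_{H^3(\M)}$ from Corollary~\ref{cor:H1_apriori} together with a time-quadrature argument (the mismatch between $\omega$ and $\omega_h$ being absorbed via Lemma~\ref{extension}). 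Since $\tilde u_h-\hat u_h = O(\tau\partial_\tau u_h)$ on each slab, $\|\tilde u_h-\hat u_h\|_{L^2(\cO)}\lesssim\tau\,|||u_h|||_F\lesssim h\|u\|_{H^3(\M)}$.

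The central step is the bound on $\|\Box e\|_{H^{-1}(\M)}$, which I would attack by duality. For a test function $\phi\in C_c^\infty(\M)$ with $\|\phi\|_{H^1(\M)}\leq 1$, integration by parts (legitimate because $\tilde u_h$ vanishes on $\partial\Omega_h$ and $\phi$ has compact support in $\M$) yields
\begin{equation*}
\langle\Box e,\phi\rangle \;=\; \int_\M f\phi \;+\; \int_\M \partial_t\tilde u_h\,\partial_t\phi \;-\; \int_\M\nabla\tilde u_h\cdot\nabla\phi.
\end{equation*}
Summation by parts in time in the middle integral (using $\phi(0,\cdot)=\phi(T,\cdot)=0$) rewrites it as $-\tau\sum_{n=2}^N(\partial_\tau^2 u_h^n,\phi(t_{n-1},\cdot))_h$, after which $\phi(t_{n-1},\cdot)$ may be replaced by the time-average $\bar\phi^{n-1}:=\tau^{-1}\int_{t_{n-2}}^{t_{n-1}}\phi\,dt\in H^1_0(\Omega)$ at an acceptable $O(h)$ cost. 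Next I would test the Euler-Lagrange equation $A_1(u_h,w)=\tau\sum_{n=2}^N(f^n,w^n)_h$ with $w^n=\pi_h\bar\phi^{n-1}$ and substitute. The residuals split into (a) the source-quadrature-plus-projection gap between $\int_\M f\phi$ and $\tau\sum(f^n,\pi_h\bar\phi^{n-1})_h$; (b) the spatial-projection error $\tau\sum(\partial_\tau^2 u_h^n,(I-\pi_h)\bar\phi^{n-1})_h$, controlled by Lemma~\ref{lem:approx_H1} together with $|||u_h|||_F\lesssim\|u\|_{H^3(\M)}$; (c) a time-consistency gap between $\tau\sum a_h(u_h^n,\pi_h\bar\phi^{n-1})$ and $\int_\M\nabla\tilde u_h\cdot\nabla\phi$; and (d) a geometric residual handled by Lemma~\ref{extension}. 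Each of (a), (b), (d) is $\lesssim h\|u\|_{H^3(\M)}\|\phi\|_{H^1(\M)}$ by Corollary~\ref{cor:H1_apriori}, Cauchy-Schwarz in $n$, and the elementary inequality $\tau\sum_n\|\bar\phi^n\|_{H^1(\Omega)}^2\lesssim\|\phi\|_{H^1(\M)}^2$.

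The hardest piece will be an endpoint contribution within (c) of the form $\tau(\nabla u_h^1,\nabla\bar\phi^1)_h$: because $A_1$ is enforced only for $n\geq 2$, the coefficient of $u_h^1$ in $\int_\M\nabla\tilde u_h\cdot\nabla\phi$ is not absorbed by the Euler-Lagrange substitution. The naive Cauchy-Schwarz bound is insufficient since $\|\nabla\bar\phi^1\|_{L^2(\Omega)}\lesssim \tau^{-1/2}\|\phi\|_{H^1(\M)}$ (the slab has width $\tau$), giving at best $O(\tau^{1/2})=O(h^{1/2})$ after the prefactor $\tau$. Lemma~\ref{bt} is tailored precisely to this obstruction: it delivers the $h$-independent bound $|(\nabla u_h^1,\nabla\bar\phi^1)_h|\lesssim\|u\|_{H^3(\M)}\|\phi\|_{H^1(\M)}$, so multiplication by $\tau$ yields the desired $O(h)$. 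A companion endpoint $(\partial_\tau u_h^1,\phi(0,\cdot))_h$ vanishes because $\phi(0,\cdot)=0$. Collecting all the estimates gives $\|\Box e\|_{H^{-1}(\M)}\lesssim h\|u\|_{H^3(\M)}$, which together with the two earlier bounds completes the proof via Theorem~\ref{continuum}.
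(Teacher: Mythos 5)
Your proposal reproduces the paper's proof in all essentials: the reduction via Theorem~\ref{continuum} to bounds on $\|e\|_{L^2(\cO)}$, the residual in $H^{-1}(\M)$, and the trace on $(0,T)\times\partial\Omega$; the trace and observation-domain bounds via Lemma~\ref{lem:geom_trace_error}, Lemma~\ref{extension}, Proposition~\ref{apriori error} and Corollary~\ref{cor:H1_apriori}; and the duality argument for the residual using time averages, the Euler--Lagrange equation combined with the $H^1$-projection \eqref{eq:H1proj}, the geometric residual via Lemma~\ref{extension}, and Lemma~\ref{bt} for the first-slab gradient term, which is exactly the paper's term $V$. The one point where your bookkeeping would not go through as written is the choice of the backward averages $\bar\phi^{n-1}$ as test functions: expanding $\int_\M \nabla\tilde u_h\cdot\nabla\phi$ slab by slab produces the averages $\bar\phi^{n}$ over $[t_{n-1},t_n]$, so your item (c) contains, besides the interpolation term (the paper's $VI$) and the $n=1$ endpoint, the shift sum $\tau\sum_{n=2}^N a_h\bigl(u_h^n,\bar\phi^{n}-\bar\phi^{n-1}\bigr)$; since $\nabla(\bar\phi^{n}-\bar\phi^{n-1})$ is controlled only through $\partial_t\nabla\phi$, this cannot be estimated using $\|\phi\|_{H^1(\M)}$ alone, and summation by parts in $n$ trades it for endpoint terms $\tau a_h(u_h^2,\bar\phi^1)$ and $\tau a_h(u_h^N,\bar\phi^N)$ that Cauchy--Schwarz only bounds by $O(h^{1/2})$, so they would need Lemma~\ref{bt}-type arguments of their own. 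The clean fix is the paper's pairing: test the Euler--Lagrange equation with $\pi_h\bar\phi^{n}$ rather than $\pi_h\bar\phi^{n-1}$; then the point-value-to-average replacement error sits entirely in the $L^2$-paired $\partial_\tau^2$ sum (the paper's term $III$), where only $\|\partial_t\phi\|_{L^2(\M)}$ is needed, the gradient sums cancel exactly through \eqref{eq:H1proj}, and the sole leftover is the term $\tau(\nabla u_h^1,\nabla\bar\phi^1)_h$ that you correctly single out and treat with Lemma~\ref{bt}. With that one adjustment your argument coincides with the paper's.
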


\begin{proof}
Recall the standing assumption that $\tau = \mathcal{O}(h)$. Let $e= u -\tilde{u}_h$ and define the linear functional
\bel{bounda}
\langle r,w\rangle = \int_0^T \int_{\Omega} (-\partial_t e \cdot \partial_t w + \nabla e \cdot \nabla w) \,dx \,dt    \hspace{5mm} \forall w \in H^1_0(\mathcal{M}). 
\ee
Applying Theorem \ref{continuum} we see that there holds
\[
\sup_{t\in[0,T]}(\|e(t,\cdot)\|_{L^2(\Omega)}+\|\p_t e(t,\cdot)\|_{H^{-1}(\Omega)}) \lesssim \| e\|_{L^2(\cO)} +
\|r\|_{H^{-1}(\M)} + \| e\|_{L^2((0,T)\times \partial \Omega)} .
\]
We will show that:
\begin{equation}
\label{boundary}
\| e\|_{L^2((0,T)\times \partial \Omega)} \leq C h \|u\|_{H^3(\M)},
\end{equation}
\begin{equation}
\label{local}
\| e\|_{L^2(\cO)} \leq C h \|u\|_{H^3(\M)},
\end{equation}
and
\begin{equation}
\label{global}
|\langle r,w\rangle| \lesssim h  \|u\|_{H^3(\M)} \|w\|_{H^1_0(\mathcal{M})}.
\end{equation}
Estimate \eqref{boundary} and  \eqref{local} will basically follow
once we control the $L^2$ norm of the error function in $(0,T) \times
\partial \Omega$ and $(0,T) \times \omega_h$, but \eqref{global} will
be more delicate as there is no immediate relation that bounds $\|\Box
e\|_{H^{-1}(\mathcal{M})}$ from above by $\|\Box
e\|_{H^{-1}((0,T)\times \Omega_h)}$. Let us begin with
\eqref{boundary}. Since $u(t) \in H^1_0(\Omega)$
\[
\| e\|^2_{L^2((0,T)\times \partial \Omega)} = \|\tilde
u_h\|^2_{L^2((0,T)\times \partial \Omega)} \lesssim \tau \sum_{n=0}^N \|u_h\|^2_{L^2((0,T)\times \partial \Omega)}.
\] 
Applying Lemma \ref{lem:geom_trace_error} followed by Corollary ~\ref{cor:H1_apriori} we have
\[
\tau \sum_{n=0}^N \|u^n_h\|^2_{L^2((0,T)\times \partial \Omega)}
\lesssim \tau \sum_{n=0}^N h^2 \|\nabla u^n_h\|_{h}^2 \lesssim
h^2\|\nabla u_h^0\|^2+h^2\|\nabla u_h^1\|^2+h^2 |||u_h|||_F^2 \lesssim C h^2 \|u\|_{H^3(\M)}^2.
\]
Now we consider the bounds \eqref{local} and \eqref{global}. Define the time discrete projection operator $\pi_0$ as follows:
$$ \pi_0 v := v(t^n) \hspace {5mm} \forall t \in (t_{n-1}, t_n], \hspace{5mm} n=1,...,N.$$
Then:
$$ \|\pi_0 v - v \|_{L^2(0,T)} \leq  \tau \| \partial_t v \|_{L^2(0,T)}.$$
We have:
$$ \|e\|_{L^2((0,T)\times\omega_h)}^2 \leq C(h^2 +\tau^2)\|u\|_{H^1(\mathcal{M})}^2 + \int_0^T \|\pi_0\pi_h u - \tilde{u}_h\|_{\omega_h}^2 \,dt,$$
and 
$$ \int_0^T \|\pi_0\pi_h u - \tilde{u}_h\|_{\omega_h}^2 \,dt \leq  \int_0^T \|\pi_0\pi_h u - \pi_0 \tilde{u}_h\|_{\omega_h}^2 \,dt+ \int_0^T \|\pi_0 \tilde{u}_h- \tilde{u}_h\|_{\omega_h}^2 \,dt$$
$$= \tau \sum_1^N\|\pi_h u^n - u_h^n\|_{\omega_h}^2+ \sum_1^N \int_{t_{n-1}}^{t_n} \|\pi_0 \tilde{u}_h- \tilde{u}_h\|_{\omega_h}^2 \,dt.$$
Here the first term is bounded by $|||u_h - \pi_h u |||_R$ and we use the identity
$$ \tilde{u}_h(t) = u_h^n + (t-t_n)\partial_{\tau} u^n_h, \quad t \in (t_{n-1},t_n]$$
to estimate the second one as follows:
$$\sum_1^N \int_{t_{n-1}}^{t_n} \|\pi_0 \tilde{u}_h- \tilde{u}_h\|_{\omega_h}^2 \,dt=\sum_1^N \int_{t_{n-1}}^{t_n} \|(t_n-t)\partial_\tau u^n_h\|_{\omega_h}^2 \,dt \leq \tau \sum_1^N \|\tau \partial_\tau u_h^n\|_h^2$$
$$ \leq   \tau \sum_1^N \|\tau \partial_\tau \pi_h u^n\|_h^2+ \tau \sum_1^N \|\tau \partial_\tau (\pi_h u^n - u_h^n)\|_h^2.$$
The first term above is bounded by $\tau^2\|u\|_{H^3(\M)}^2$ and as $\tau = \mathcal{O}(h)$, the second term is bounded by $h^2|||\pi_h u - u_h|||^2_F$. Hence, using  Proposition ~\ref{apriori error} we deduce that
$$\|e\|_{L^2((0,T)\times\omega_h)}^2 \lesssim h^2 \|u\|_{H^3(\M)}^2.$$
\noindent Now, using Lemma ~\ref{extension} on the domains $\omega_h$ and
$\omega$, by choosing $v=u$ and noting that $v(t) \in H^{1}(\Omega)$ for a.e $t \in (0,T)$ we obtain that:
\begin{multline}
 \|e\|_{L^2((0,T) \times (\omega \setminus \omega_h))}^2 \lesssim 
 h^2 (\|e\|_{L^2((0,T) \times \partial \omega)}^2+ h^2
  \|e\|^2_{L^2((0,T);H^1(\omega))})\\
\lesssim h^2 
  \|e\|^2_{L^2((0,T);H^1(\Omega))} \lesssim h^2 
  (\|u\|^2_{L^2((0,T);H^1(\Omega))}+ \|\tilde u_h\|^2_{L^2((0,T);H^1(\Omega))}).
\end{multline}
By the Poincar\'e inequality and the definition of $\tilde u_h$
\[
\|\tilde u_h\|^2_{L^2((0,T);H^1(\Omega))} \lesssim \tau \sum_{n=1}^N
 (\|\nabla u^n_h\|_h^2+\|\nabla u^{n-1}_h\|^2_{L^2(\Omega)} )
\lesssim \tau \sum_{n=0}^N \|\nabla u^n_h\|_h^2.
\]
We now observe that, using the second inequality of Corollary \ref{cor:H1_apriori}
\[
\tau \sum_{n=0}^N \|\nabla u^n_h\|_h^2 = \tau (\|\nabla
u^0_h\|_h^2+\|\nabla
u^1_h\|_h^2) +|||u_h|||_F^2 \lesssim \|u\|_{H^3(\M)}^2
\]

\noindent Finally, combining the preceding four inequalities yields the desired claim \eqref{local}. We now prove \eqref{global}. 
Using the definition of $e$ and the equation $\Box u =f$, we see that
\bel{boundres}
\langle r,w \rangle = \int_0^T \int_{\Omega} f w \,dx \,dt - \int_0^T
\int_{\Omega} (-\partial_t \tilde u_h \cdot \partial_t w + \nabla
\tilde u_h \cdot \nabla w) \,dx \,dt.
\ee
Recalling that $u_h$ has been extended by zero and that by extension
$w\vert_{\Omega_h \setminus \Omega} = 0$, we have
\bel{boundresh}
\langle r,w \rangle = \int_0^T \int_{\Omega_h} f^e w \,dx
\,dt+\int_0^T \int_{\Omega \setminus \Omega_h} f^e w \,dx \,dt - \int_0^T
\int_{\Omega_h} (-\partial_t \tilde u_h \cdot \partial_t w + \nabla
\tilde u_h \cdot \nabla w) \,dx \,dt 
\ee
\noindent Using integration by parts and recalling that $w(0,\cdot) = w(T,\cdot)=0$ we have
\[
\int_0^T
\int_{\Omega_h} (-\partial_t \tilde u_h \cdot \partial_t w)  \,dx \,dt
= 
 \tau \sum_{n=1}^{N-1} \int_{\Omega_h} \partial_\tau^2 u_h^{n+1} w(\cdot,t^n) \,dx
\]
Now, recalling the definition of the time averaged function $\bar w$, and considering the right hand side of \eqref{boundresh} we see that
\begin{multline}
\langle r,w \rangle = \underbrace{\int_0^T \int_{\Omega \setminus \Omega_h} f^e w \,dx + (f, (w - \bar w) )_{\mathcal{M}_h}}_{I} + \underbrace{(f,\bar
w)_{\mathcal{M}_h}}_{II} - \underbrace{\tau \sum_{n=1}^{N-1}
(\partial_\tau^2 u_h^{n+1}, w(\cdot,t^n)-\bar
w^{n+1})_h}_{III}\\
- \underbrace{\tau \sum_{n=2}^N  [(\partial_\tau^2 u_h^{n},\bar
w^{n})_h+(\nabla u_h^{n},\nabla \bar w^{n})_h]}_{IV} - \underbrace{\tau (\nabla u_h^{1},\nabla \bar w^{1})_h }_{V}
-\underbrace{\sum_{n=1}^N \int_{t_{n-1}}^{t_n}(t-t_n) (\nabla \partial_\tau u_h^n,\nabla w)_h \,dt }_{VI}.
\end{multline}
We now proceed to bound the six terms $I$-$VI$ of the right hand
side. First using Lemma \ref{extension} and a Poincar\'e inequality,
\[
I \leq C (\tau + h) \|f\|_{L^2(\mathcal{M})}(\|\nabla
w\|_{L^2(\mathcal{M})} + \|\partial_t w\|_{L^2(\mathcal{M})}).
\]
Then we use approximation in time on $f - f^1$ and $f - f^n$
\[
II = \int_{t^{0}}^{t^1} (f - f^1,\bar w^1)_h \,dt + \tau (f^1,\bar w^1)_h  +\sum_{n=2}^N \int_{t^{n-1}}^{t^n} (f - f^n,\bar w^n)_h \,dt +  \tau
 \sum_{n=2}^N (f^n,\bar w^n)_h\]
\[
\lesssim \tau (\|f\|_{H^1(\mathcal{M})}
\|w\|_{H^1(\mathcal{M})} ) + \underbrace{
\tau \sum_{n=2}^N (f^n,\bar w^n)_h}_{IV'}.
\]
Using that $\|w(\cdot, t)-\bar
w^{n+1}\|_{(t_n,t_{n+1})} \lesssim \tau \|\partial_t
w\|_{(t_n,t_{n+1})}$ we have for term $III$
\begin{multline}
III = \sum_{n=1}^{N-1}
(\partial_\tau^2 u_h^{n+1}, \int_{t_n}^{t_{n+1}} (\int_{t_n}^{t}
(\partial_s w(\cdot,s) \,ds +w(\cdot, t)-\bar
w^{n+1}) \,dt))_h \\ \lesssim \tau \left(\sum_{n=2}^N \tau\| \partial^2_\tau
u_h^n\|^2_h\right)^{\frac12} \|w\|_{H^1(\mathcal{M})}\lesssim \tau
|||u_h|||_F \|w\|_{H^1(\mathcal{M})}.
\end{multline}
For the sum of terms $IV'$ and $IV$ we use \eqref{EL} and
\eqref{eq:H1proj} to obtain
\begin{multline}
IV'+IV =  \tau \sum_{n=2}^N [(f^n,\bar w^n)_h-(\partial_\tau^2 u_h^{n},\bar
w^{n})_{h}-a(u_h^{n},\bar w^{n})]\\ 
= \tau \sum_{n=2}^N  [(f^n,\bar w^n-\pi_h \bar w^n)_h-(\partial_\tau^2 u_h^{n},\bar
w^{n}-\pi_h \bar w^n)_h]\\
\lesssim \tau |||u_h|||_F  \|\nabla w\|_{L^2(\mathcal{M})}
\lesssim h \|u\|_{H^3(\M)} \|w\|_{H^1(\mathcal{M})}.
\end{multline}

\noindent The estimate for $V$ follows immediately from Lemma
~\ref{bt}. Finally for the term $VI$ we use Cauchy-Schwarz inequality
and Corollary \ref{cor:H1_apriori} to write
\begin{equation}
VI \lesssim \left(\tau \sum_{n=1}^{N} \|\tau \nabla \partial_\tau u_h^{n}\|^2_h\right)^{\frac12}\|w\|_{H^1(\mathcal{M})}  
 \lesssim |||u_h|||_R \|w\|_{H^1(\mathcal{M})} \lesssim \tau\|u\|_{H^3(\M)} \|w\|_{H^1(\mathcal{M})}.
\end{equation}
\end{proof}
\section{Further Results}

\subsection{The case of perturbations in data}

Theorem ~\ref{error estimate} implies that the the Finite Element method described above is Lipschitz stable and therefore the extension of the above analysis to the case where the data is perturbed
is straightforward. Indeed assume that the observable data $(\hat{q},\hat{f})$ satisfies:
$$ \hat{q}= q + \delta q \quad \hat{f}=f + \delta f,$$

\noindent with $\delta q \in H^1((0,T);L^2(\omega))$ and $\delta f \in H^1((0,T);L^2(\Omega))$. Then a standard perturbation argument leads to similar results as in Proposition ~\ref{apriori error} and Theorem \ref{error estimate}, but with an additional term of
the form
\[ \|\delta q\|_{H^1(0,T;L^2(\omega))} + \|\delta f\|_{H^1(0,T;L^2(\Omega))}\]
\noindent in the right hand side of the bounds of the error estimates. This is a similar result
as one would obtain for a well-posed problem. Indeed this implies that the same Lipschitz stability estimate as in Theorem ~\ref{error estimate} would then hold as long as the perturbations in data are roughly the same as the size of the mesh parameter $h$. 

\subsection{Polyhedral boundaries}
\noindent Recall that the proof of the key continuum estimate in Theorem ~\ref{continuum} only works for smooth boundaries. Indeed the boundary smoothness assumption imposed in this paper is purely an artifact of the continuum estimate as the FEM would be much simpler to apply for polyhedral boundaries and the discrete solution $(u_h,z_h)$ would also exist and be unique. It is however possible to obtain a similar statement as in Theorem ~\ref{error estimate} for the case where $\Omega$ is a convex domain with a polyhedral boundary $\partial \Omega$. Here we present an admissibility condition that will be in some ways an alternative formulation of the geometric control condition or the $\Gamma-$condition for domains with polyhedral boundaries. Once this admissibility condition is satisfied for the observable domain $\cO$, one can proceed to prove that Theorem ~\ref{error estimate} holds. To formulate this condition, we assume that there exists an auxiliary exhaustion of the polyhedral domain $\Omega$ by a sequence $\{\Omega_n\}_{n \in \N}$ such that the following properties are satisfied:

\begin{itemize}
\item{$\forall n \in \N \quad \Omega_{n} \subset \Omega_{n+1}$,}
\item{$\mu(\Omega \setminus \Omega_n) \leq \frac{1}{n}$, where $\mu$ denotes the Lebesgue measure,}
\item{$\forall n \in \N \quad \partial \Omega_n \in C^{\infty}$.} 
\item{$(0,T)\times(\Omega_n \cap \omega) $ satisfies the geometric control condition in $(0,T) \times \Omega_n$ for all $n$.}
\item{The constants $C_n$ in the observability estimates corresponding to $(0,T) \times (\Omega_n \cap \omega)$ are uniformly bounded.}
\end{itemize} 
For polyhedral domains $\Omega$, we call the sets $\cO=(0,T)\times \omega$ with the above properties to be {\bf admissible}. Note that the first three conditions will always be possible for any polyhedral domain $\Omega$. It is merely the last two conditions which may not be true for an arbitrary domain $\cO$. It is easy to check that if $\cO$ satisfies the $\Gamma-$condition, then the admissibility condition above holds and therefore the implementation of the FEM in these cases works even for polyhedral boundaries $\Omega$. It would be a very interesting question to study how this admissibility condition can more generally be written for $\Omega, \omega, T$ without the use of the sequence $\Omega_n$. 

\subsection{Stability with less regularization}

Let us return to the explicit form of the Lagrangian functional $\mathcal{L}(u,z)$ in \eqref{Lagrangian} and sketch some heuristic arguments regarding the discrete level regularization terms and the possibility of altering or removing them. The terms $\frac{\tau}{2} \sum_2^N \|u^n-q^n\|_{\omega}^2+G(u,z) -\tau \sum_{2}^N (f^n,z^n)$ are absolutely necessary if we want the critical points of the Lagrangian functional to converge to the solution of \eqref{pf}. The two regularizer terms $\frac{1}{2} \|h\nabla u^1\|_h^2+\frac{1}{2} \|h\partial_{\tau} u^1\|_h^2$ control the initial energy of the system and seem to be a natural term in the regularization. However, the additional terms $\frac{1}{2}\|h\nabla \partial_\tau u^1\|_h^2+\frac{1}{2}\|h\nabla \partial_\tau u^N\|_h^2+\frac{\tau}{2} \sum_2^N \|\tau \nabla \partial_{\tau}u^n\|_h^2$ control a particular choice of mixed derivatives of $u$. The advantage of using these additional regularizer terms is that it yields Lipschitz stability of the FEM with the optimal rate $h$ and it avoids the use of any dual stabilizer terms for $z$ in the Lagrangian. There is some freedom in the selection of these regularizers. For example, one may be able to remove the bulk regularizer term $\frac{\tau}{2} \sum_2^N \|\tau \nabla \partial_{\tau}u^n\|_h^2$ and replace it with only initial and final data regularizers such as $\frac{1}{2}\|h\p^2_{\tau}u^2\|_h^2+\frac{1}{2}\|h\nabla \partial_\tau u^1\|_h^2+\frac{1}{2}\|h\nabla \partial_\tau u^N\|_h^2$ and obtain the same error estimate. This will require an alternative energy estimate (see Lemma ~\ref{energy1}) and as such will require the smoothness class $u \in H^4(\M)$.

One could prove Theorem ~\ref{error estimate} using a less number of regularization terms but at the cost of a slower rate of decay. For example, using the Lagrangian functional
$$  \hat{\mathcal{L}}(u,z)=\frac{\tau}{2} \sum_{n=2}^N \|u^n-q^n\|_{\omega}^2+G(u,z) -\tau \sum_{2}^N (f^n,z^n)+\frac{1}{2} \|h\nabla u^1\|_h^2+\frac{1}{2} \|h\partial_{\tau} u^1\|_h^2+\frac{1}{2}\|h\partial_{\tau}\nabla u^1\|_h^2,$$
it is possible to prove Theorem ~\ref{error estimate} with a slower rate of decay of $\mathcal{O}(\sqrt{h})$ for the error function. It is also possible to obtain a linear convergence for the error function in weaker norms using the following 'minimal' Lagrangian functional:
$$ \tilde{\mathcal{L}}(u,z)=\frac{\tau}{2} \sum_{n=2}^N \|u^n-q^n\|_{\omega}^2+G(u,z) -\tau \sum_{2}^N (f^n,z^n)+\frac{1}{2} \|h\nabla u^1\|_h^2+\frac{1}{2} \|h\partial_{\tau} u^1\|_h^2,$$
\noindent In this case, one can still prove Lemma ~\ref{energy2} in the exact same manner. A similar estimate can be proved for $u$ as well by choosing the test function $w$ through: 
$$ w^n :=(2T-n\tau) \partial_\tau u^n +  \tau \sum_{m=0}^n (1 + m \tau) u^m.$$
\noindent This will give positive control of $\|\tau \sum_{m=0}^n u^m\|_h^2$ together with $\tau \sum_{n=0}^N\|u^n\|_h^2$ and $\tau \sum_{n=1}^N\|\partial_\tau u^n\|_h^2$. Using these alternative estimates one can show that there exists a unique discrete solution $(u_h,z_h)$ to 
$$ \partial_{u} \tilde{\mathcal{L}}(u_h,z_h)=0,$$
$$ \partial_{z} \tilde{\mathcal{L}}(u_h,z_h)=0.$$

\noindent Now let $\mathcal{E}{u}^n = \tau \sum_{m=0}^n u^n$ and set
$$ \tilde{u}_h := \frac{1}{\tau}( (t-t_{n-1}) \mathcal{E}{u_h}^n + (t_n-t)\mathcal{E}{u_h}^{n-1}) \quad \forall t \in [t_{n-1},t_n] .$$

\noindent One can then prove that if $e := \int_0^t u - \tilde{u}_h$, then the following weak stability estimate for the above FEM holds as well:
$$ \|e\|_{L^2(\mathcal{M})} \leq C h \|u\|_{H^3(\M)}\quad \text{and}\quad\|u_0-u_h^0\|_{H^{-1}(\Omega)} \leq C h \|u\|_{H^3(\M)}.$$



\end{document}